\newcommand{\diag}{{\rm diag}}
\newcommand{\relu}{{\rm ReLU}}
\newcommand{\conv}{{\rm conv}}
\newcommand{\vertiii}[1]{{\left\vert\kern-0.25ex\left\vert\kern-0.25ex\left\vert #1 
		\right\vert\kern-0.25ex\right\vert\kern-0.25ex\right\vert}}
\newtheorem{lemma}{Lemma}
\newtheorem{df}{Definition}
\newenvironment{proof}[1][]{\noindent {\bf Proof #1:\;}}{\hfill $\Box$}
\title{Semialgebraic Representation of Monotone Deep Equilibrium Models and Applications to Certification}
\author{%
	Tong Chen\\
	LAAS-CNRS\\
	Universit\'e de Toulouse\\
	31400 Toulouse, France\\
	\texttt{tchen@laas.fr} \\
	\And
	Jean-Bernard Lasserre \\
	LAAS-CNRS \& IMT \\
	Universit\'e de Toulouse\\
	31400 Toulouse, France\\
	\texttt{lasserre@laas.fr} \\
	\AND
	Victor Magron \\
	LAAS-CNRS \\
	Universit\'e de Toulouse\\
	31400 Toulouse, France\\
	\texttt{vmagron@laas.fr} \\
	\And
	Edouard Pauwels \\
	IRIT \& IMT \\
	Universit\'e de Toulouse\\
	31400 Toulouse, France\\
	\texttt{edouard.pauwels@irit.fr} \\
}
\begin{document}
	\maketitle
	\begin{abstract}
		Deep equilibrium models are based on implicitly defined functional relations and have shown competitive performance compared with the traditional deep networks. Monotone operator equilibrium networks (monDEQ) retain interesting performance with additional theoretical guaranties. Existing certification tools for classical deep networks cannot directly be applied to monDEQs for which much fewer tools exist. We introduce a semialgebraic representation for ReLU based monDEQs which allows to approximate the corresponding input output relation by semidefinite programming (SDP). We present several applications to network certification and obtain SDP models for the following problems : robustness certification, Lipschitz constant estimation, ellipsoidal uncertainty propagation. We use these models to certify robustness of monDEQs w.r.t. a general $L_q$ norm. Experimental results show that the proposed models outperform existing approaches for monDEQ certification. Furthermore, our investigations suggest that monDEQs are much more robust to $L_2$ perturbations than $L_{\infty}$ perturbations.
	\end{abstract}
	
	\section{Introduction}
	
	With the increasing success of Deep Neural Networks (DNN) (e.g. computer vision, natural language processing), one witnesses a significant increase in size and complexity (topology and activation functions). This generates difficulties for theoretical analysis and a posteriori performance evaluation. This is problematic for applications where robustness issues are  crucial, for example inverse problems (IP) in scientific computing. Indeed such IPs are notoriously ill-posed and as stressed in the March 2021 issue of SIAM News \cite{veg2021deep}, \emph{``Yet DL has an Achilles’ heel. Current implementations can be highly unstable, meaning that a certain small perturbation to the input of a trained neural network can cause substantial change in its output. This phenomenon is both a nuisance and a major concern for the safety and robustness of DL-based systems in critical applications—like healthcare—where reliable computations are essential"}. Indeed, the Instability Theorem 
	\cite{veg2021deep} predicts unavoidable lower bound on Lipschitz contants, which may explain the lack of stability 
	of some DNNs, over-performing on training sets. This underlines the need to evaluate precisely a posteriori critical indicators, such as Lipschitz constants of DNNs. However, obtaining an accurate upper bounds on the Lipschitz constant of a DNN is a hard problem, it reduces to {\em certifying globally} an inequality ``$\Psi(\mathbf{x})\geq 0$ for all $\mathbf{x}$ in a domain'', i.e., to provide a \emph{certificate of positivity} for a function $\Psi$, which has no simple explicit expression. Even for modest size DNNs this task is practically challenging, previous successful attempts \cite{chen2020semi, latorre2020lipschitz} were restricted in practice to no more than two hidden layers with less than a hundred nodes. More broadly existing attempts to DNN certification rely either on zonotope calculus, linear programming (LP) or hierarchies of SDP based on positivity certificates from algebraic geometry \cite{putinar1993positive}, which may suffer from the curse of dimensionality.  
	
	Recently, \emph{Deep Equilibrium Models (DEQ)} \cite{bai2019deep} have emerged as a potential alternative to classical DNNs. With their much simpler layer structure, they provide competitive results on machine learning benchmarks \cite{bai2019deep,bai2020multiscale}. The training of DEQs involves solving fix-point equations for which algorithmic success requires conditions. Fortunately, \emph{Monotone operator equilibrium network (monDEQ)} introduced in \cite{winston2020monotone} satisfies such conditions. Moreover, the authors in \cite{pabbaraju2021estimating} provide explicit bounds on global Lipschitz constant of monDEQs  (w.r.t. the $L_2$-norm) which can be used for robustness certification.
	
	From a certification point of view, DEQs have the definite advantage of being relatively small in size compared to DNNs and therefore potentially more amenable to sophisticated techniques (e.g. algebraic certificates of positivity) which rapidly face their limit even in modest size classical DNNs. Therefore monDEQs constitute a class of DNNs for which robustness certification, uncertainty propagation or Lipschicity could potentially be investigated in a more satisfactory way than classical networks. Contrary to DNNs, for which a variety of tools have been developped, certification of DEQ modeld is relatively open, the only available tool is the Lipschitz bound in \cite{pabbaraju2021estimating}.
	
	\subsection*{Contribution}
	We present three general semialgebraic models of ReLU monDEQ for certification ($p \in \mathbb{Z}_+ \cup \{+\infty\}$):
	
	$\bullet$ \emph{Robustness Model} for network $L_p$ robustness certification. 
	
	$\bullet$ \emph{Lipschitz Model} for network Lipschitz constant estimation with respect to any $L_p$ norm.
	
	$\bullet$ \emph{Ellipsoid Model} for ellipsoidal outer-approximation of the image by the network of a polyhedra or an ellipsoid. 
	
	All these models can be used for robustness certification, a common task which we consider experimentally. Both Lipschitz and Ellipsoid models can in addition be used for further a posteriori analyses. Interestingly, all three models are given by solutions of semidefinite programs (SDP), obtained by Shor relaxation of a common semialgebraic representation of ReLU monDEQs. Our models are all evaluated to simple ReLU monDEQs on MNIST dataset similar as \cite{winston2020monotone,pabbaraju2021estimating} on the task of robustness certification. We demonstrate that all three models ourperform the approach of \cite{pabbaraju2021estimating} and the Robustness Model being the most efficient. Our experiments also suggest that DEQs are much less robust to $L^\infty$ perturbations than $L^2$ perturbations, in contrast with classical DNNs \cite{Raghuathan18}. 
	
	\subsection*{Related works}
	Neural network certification is a challenging topic in machine learning, contributions include:
	
	\paragraph{Robustness certification of DNNs} Even with high test accuracy, DNNs are very sensitive to tiny input perturbations, see e.g. \cite{szegedy2013intriguing, goodfellow2014explaining}. Robustness to input perturbation has been investigated in many different works with various techniques, including SDP relaxation in SDP-cert \cite{Raghuathan18}, abstract interpretation with ERAN \cite{gehr2018ai2, singh2018fast}, LP relaxation in Reluplex \cite{katz2017reluplex}, analytical certification with Fast-lin \cite{weng2018towards} and CROWN \cite{zhang2018efficient}, and their extension to convolutional neural networks with CNN-Cert \cite{boopathy2019cnn}. All these methods are restricted to DNNs or CNNs and do not directly apply to DEQs.
	
	\paragraph{Lipschitz constant estimation of DNNs} Lipschitz constant of DNNs can be used for robustness certification \cite{szegedy2013intriguing, hein2017formal}. Existing contributions include naive layer-wise product bound \cite{huster2018limitations, virmaux2018lipschitz}, the LP relaxations for CNNs \cite{zou2019lipschitz} and DNNs \cite{latorre2020lipschitz}, as well as SDP relaxations  \cite{raghunathan2018certified, fazlyab2019efficient, chen2020semi}.
	
	\paragraph{Lipschitz constant estimation of DEQs} The authors in \cite{pabbaraju2021estimating} provide an optimization-free upper bound of the Lipschitz constant of monDEQs depending on network weights. This bound is valid for $L^2$ norm and we present a general model for arbitrary $L^p$ norm.
	
	\section{Preliminary Background and  Notations}\label{notation}
	We consider the \emph{monotone operator equilibrium network (monDEQ)} \cite{winston2020monotone}, with a single implicit hidden layer. The main difference between monDEQ and \emph{deep equilibrium network (DEQ)} \cite{bai2019deep} is that \emph{strong monotonicity} is enforced on weight matrix and activation functions to guarantee the convergence of fixed point iterations. The authors in \cite{winston2020monotone} proposed various structures of implicit layer, we only consider fully-connected layers, investigation of more advanced convolutional layers is in our list of future works.
	
	\textbf{Network description}: Denote by $F : \mathbb{R}^{p_0} \rightarrow \mathbb{R}^K$ a fully-connected monDEQ for classification, where $p_0$ is the input dimension and $K$ is the number of labels. Let $\mathbf{x}_0 \in \mathbb{R}^{p_0}$ be the input variable and $\mathbf{z} \in \mathbb{R}^p$ be the variable in the implicit layer. We consider the $\relu$ activation function, which is simply defined as $\relu (x) = \max \{0,x\}$, and the output of the monDEQs can be written as
	\begin{align*}\label{mondeq}
		F(\mathbf{x}_0) = \mathbf{C} \mathbf{z} + \mathbf{c}, \mathbf{z} = \relu (\mathbf{Wz} + \mathbf{Ux}_0 + \mathbf{u}), \tag{monDEQ}
	\end{align*}
	where $\mathbf{W} \in \mathbb{R}^{p\times p}, \mathbf{U} \in \mathbb{R}^{p \times p_0}, \mathbf{u} \in \mathbb{R}^p, \mathbf{C} \in \mathbb{R}^{K \times p}, \mathbf{c} \in \mathbb{R}^K$ are parameters of the network. The vector-valued function $F(\mathbf{x}_0)$ provides a score for each label $i \in \{1, \ldots, K\}$ associated to the input $\mathbf{x}_0$, the prediction corresponds to the highest score, i.e., $y_{\mathbf{x}_0} = \arg \max_{i = 1, \ldots, K} F(\mathbf{x}_0)_i$. As in \cite{winston2020monotone}, the matrix $\mathbf{I}_p - \mathbf{W}$ is \emph{strongly monotone}: there is a known $m > 0$ such that $\mathbf{I}_p - \mathbf{W} \succeq m \mathbf{I}_p$, this constraint can be enforced by specific parametrization of the matrix $\mathbf{W}$. With the monotonicity assumption, the solution to equation $\mathbf{z} = \relu (\mathbf{Wz} + \mathbf{Ux}_0 + \mathbf{u})$ is unique and can be evaluated using convergent algorithms, see \cite{winston2020monotone} for more details.
	
	\textbf{Robustness of monDEQs}: Given an input $\mathbf{x}_0 \in \mathbb{R}^{p_0}$, a norm $\|\cdot\|$, and a network $F: \mathbb{R}^{p_0} \rightarrow \mathbb{R}^{K}$ , let $y_0$ be the label of input $\mathbf{x}_0$, i.e., $y_0 = \arg \max_{i = 1, \ldots, K} F(\mathbf{x}_0)_i$. For $\varepsilon > 0$, denote by $\mathcal{E} = \mathbb{B} (\mathbf{x}_0, \varepsilon, \|\cdot\|)$ the ball centered at $\mathbf{x}_0$ with radius $\varepsilon$ for norm $\|\cdot\|$. If for all inputs $\mathbf{x} \in \mathcal{E}$, the label of $\mathbf{x}$ equals $y_0$, i.e., $y = \arg \max_{i = 1, \ldots, K} F(\mathbf{x})_i = y_0$, then we say that the network $F$ is \emph{$\varepsilon$-robust} at input $\mathbf{x}_0$ for norm $\|\cdot\|$. An equivalent way to verify whether the network $F$ is $\varepsilon$-robust is to check that for all labels $i \ne y_0$, $F(\mathbf{x})_i - F(\mathbf{x})_{y_0} < 0$.
	
	\textbf{Semialgebraicity of $\relu$ function}: The key reason why neural networks with $\relu$ activation function can be tackled using polynomial optimization techniques is \emph{semialgebraicity} of the $\relu$ function, i.e., it can be expressed with a system of polynomial (in)equalities. For $x, y \in \mathbb{R}$, we have $y = \relu (x) = \max \{0,x\}$ if and only if $y(y-x) = 0, y \ge x, y \ge 0$. For $\mathbf{x}, \mathbf{y} \in \mathbb{R}^n$, we denote by $\relu (\mathbf{x})$ the coordinate-wise evaluation of $\relu$ function, and by $\mathbf{x}\mathbf{y}$ the coordinate-wise product of $\mathbf{x}$ and $\mathbf{y}$. A subset of $\mathbb{R}^n$ defined by a finite conjunction of polynomial (in)equalities is called a \emph{basic closed semialgebraic set}. The graph of the ReLU function is a basic closed semialgebraic set. 
	
	Going back to equation \eqref{mondeq}, we have the following equivalence:
	\begin{align}
		\mathbf{z} = \relu (\mathbf{Wz} + \mathbf{Ux}_0 + \mathbf{u}) \Leftrightarrow \mathbf{z} (\mathbf{z} - \mathbf{Wz} - \mathbf{Ux}_0 - \mathbf{u}) = 0,\, \mathbf{z} \ge \mathbf{Wz} + \mathbf{Ux}_0 + \mathbf{u},\, \mathbf{z} \ge 0,
		\label{eq:mainEquivalence}
	\end{align}
	where the right hand side is a system of polynomial (in)equalities. For the rest of the paper, mention of the $\relu$ function will refer to the equivalent polynomial system in \eqref{eq:mainEquivalence}. 
	
	\textbf{POP and Putinar's positivity certificate}: In general, a \emph{polynomial optimization problem (POP)} has the form 
	\begin{align*}\label{pcop}
		\rho = \max_{\mathbf{x} \in \mathbb{R}^n} \{f(\mathbf{x}): g_i(\mathbf{x}) \ge 0, i = 1, \ldots, p\} \,, \tag{POP$_0$}
	\end{align*}
	where $f$ and $g_i$ are polynomials whose degree is denoted by $\deg$. The robustness certification model (\ref{cert}), Lipischitz constant model (\ref{lip}) and ellipsoid model (\ref{ellip}) are all POPs.
	
	In most cases, the POPs are non-linear and non-convex problems, which makes them NP-hard. A typical approach to reduce the complexity of these problems is replacing the positivity constraints by Putinar's positivity certificate \cite{putinar1993positive}. The problem \eqref{pcop} is equivalent to 
	\begin{align*}\label{pop}
		\rho = \min_{\lambda \in \mathbb{R}} \{\lambda: \lambda - f(\mathbf{x})  \ge 0, g_i(\mathbf{x}) \ge 0, i = 1, \ldots, p, \forall \mathbf{x} \in \mathbb{R}^n\} \,. \tag{POP}
	\end{align*}
	In order to reduce the size of the feasible set of problem \eqref{pop}, we replace the positivity constraint $\lambda - f(\mathbf{x}) \ge 0$ by a weighted  \emph{sum-of-square (SOS)} polynomial decomposition, involving the polynomials $g_i$. 
	Let $d$ be a non-negative integer.
	Denote by $\sigma_0^d (\mathbf{x}), \sigma_i^d (\mathbf{x})$ some SOS polynomials of degree at most $2d$, for each $i = 1, \ldots, p$. Note that if $d = 0$, such polynomials are non-negative real numbers. Then the positivity of  $\lambda - f(\mathbf{x})$ is implied by the following decomposition
	\begin{align}\label{putinar}
		\lambda - f(\mathbf{x})  = \sigma_0^d (\mathbf{x}) + \sum_{i = 1}^p \sigma_i^{d-\omega_i} (\mathbf{x}) g_i (\mathbf{x})\,, \quad \omega_i = \lceil \deg g_i /2 \rceil \,, \quad \forall \mathbf{x} \in \mathbb{R}^n \,,
	\end{align}
	for any $d \ge \max_{i} \omega_i$. Equation \eqref{putinar} is called the \emph{order-$d$ Putinar's certificate.} By replacing the positivity constraint $f(\mathbf{x}) - \lambda \ge 0$ in problem \eqref{pop} by Putinar's certificate \eqref{putinar}, we have for $d \ge \max_i \omega_i$,
	\begin{align*}\label{pop-d}
		\rho_d = \min_{\lambda \in \mathbb{R}} \{\lambda: \lambda - f(\mathbf{x})  = \sigma_0^d (\mathbf{x}) + \sum_{i = 1}^p \sigma_i^{d-\omega_i} (\mathbf{x}) g_i (\mathbf{x}) \,, \ \omega_i = \lceil \deg g_i /2 \rceil, \forall \mathbf{x} \in \mathbb{R}^n\} \,. \tag{POP-$d$}
	\end{align*}
	It is obvious that $\rho_d \ge \rho$ for all $d \ge \max_i \omega_i$. Under certain conditions (slightly stronger than compactness of the set of constraints), it is shown that $\lim_{d \rightarrow \infty} \rho_d = \rho$ \cite{lasserre2001global}. The main advantage of relaxing problem \eqref{pop} to \eqref{pop-d} is that problem \eqref{pop-d} can be efficiently solved by \emph{semidefinite programming (SDP)}. Indeed a polynomial $f$ of degree at most $2 d$ is SOS if and only if there exists a \emph{positive semidefinte (PSD)} matrix  $\mathbf{M}$ (called a \emph{Gram matrix}) such that  $f(\mathbf{x}) = \mathbf{v}(\mathbf{x})^T \mathbf{M} \mathbf{v}(\mathbf{x})$, for all $\mathbf{x} \in \mathbb{R}^p$, where $\mathbf{v}(\mathbf{x})$ is the vector of monomials of degree at most $d$.
	
	Problem \eqref{pop-d} is also called the \emph{order-$d$ Lasserre's relaxation}. When the input polynomials are quadratic, the order-1 Lasserre's relaxation is also known as \emph{Shor's relaxation} \cite{shor1987quadratic}. All our models are obtained using variations of Shor's relaxation applied to different POPs, see Section \ref{ellip} for more details.
	
	\section{Semialgebraic Models for Certifying Robustness of Neural Networks}\label{model}
	In this section, we introduce several models for certification of monDEQs. All the models are based on semialgebraicity of $\relu$ and $\partial\relu$ (the \emph{subgradient} of $\relu$, see Section \ref{lip}) to translate our targeted problems to POPs. Then we use Putinar's certificates, defined in Section \ref{notation} to relax the non-convex problems to convex SDPs which can be solved efficiently using modern solvers. Each model can be eventually used to certify robustness but they also have their own independent interest.
	
	\textbf{Notations}: Throughout this section, we consider a monDEQ for classification, denoted by $F$, with fixed, given parameters, $\mathbf{W} \in \mathbb{R}^{p \times p}, \mathbf{U} \in \mathbb{R}^{p \times p_0}, \mathbf{u} \in \mathbb{R}^p, \mathbf{C} \in \mathbb{R}^{K\times p}, \mathbf{c} \in \mathbb{R}^K$, where $p_0$ is the number of input neurons, $p$ is the number of hidden neurons, and $K$ is the number of labels. For $q \in \mathbb{Z}_+ \cup \{+\infty\}$, $\|\cdot\|_q$ is the $L_q$ norm defined by $\|\mathbf{x}\|_q : = (\sum_{i = 1}^{p_0} |x_i|^q)^{1/q}$ for all $\mathbf{x} \in \mathbb{R}^{p_0}$. Throughout this section $\epsilon > 0$ and $\mathbf{x}_0 \in \mathbb{R}^{p_0}$ are fixed, we denote by $\mathcal{E} := \mathbb{B} (\mathbf{x}_0, \varepsilon, \|\cdot\|_q) = \{\mathbf{x} \in \mathbb{R}^{p_0}: \|\mathbf{x} - \mathbf{x}_0\|_q \le \varepsilon\}$ the ball centered at $\mathbf{x}_0$ with radius $\varepsilon$ for $L_q$ norm, a perturbation region. If $q < + \infty$, i.e., $q$ is a positive integer, $\|\mathbf{x} - \mathbf{x}_0\|_q \le \varepsilon$ is equivalent to the polynomial inequality $\|\mathbf{x} - \mathbf{x}_0\|_q^q \le \varepsilon^q$; if $q = \infty$, $\|\mathbf{x} - \mathbf{x}_0\|_q \le \varepsilon$ is equivalent to $|\mathbf{x} - \mathbf{x}_0|^2 \le \epsilon^2$ (where $|\mathbf{x}|$ denotes the vector of absolute values of coordinates of $\mathbf{x}$) which is a system of $p_0$ polynomial inequalities. Hence the input set $\mathcal{E}$ is a semialgebraic set for all considered $L_q$ norms. 
	For a matrix $\mathbf{A} \in \mathbb{R}^{m \times n}$, its \emph{operator norm} induced by the norm $\|\cdot\|$ is given by $\vertiii{\mathbf{A}} : = \inf \{\lambda: \|\mathbf{A} \mathbf{x}\| \le \lambda \|\mathbf{x}\|, \forall \mathbf{x} \in \mathbb{R}^n\}$.
	
	\subsection{Robustness Model}\label{cert}
	Let $y_0$ be the label of $\mathbf{x}_0$ and let $\mathbf{z} \in \mathbb{R}^{p}$ be the variables in the monDEQ implicit layer. The proposed model directly estimates upper bounds on the gap between the score of label $y_0$ and the score of labels different from $y_0$. Precisely, for $i \in \{1, \ldots, K\}$ such that $i \ne y_0$, denote by $\xi_{i} = (\mathbf{C}_{i,:} - \mathbf{C}_{y_0,:})^T$. For $\mathbf{x} \in \mathcal{E}$, the gap between its score of label $i$ and label $y_0$ is $F(\mathbf{x})_i - F(\mathbf{x})_{y_0} = \xi_i^T \mathbf{z}$. The Robustness Model for monDEQ reads:
	\begin{align*}\label{certmon}
		\delta_i := \max_{\mathbf{x} \in \mathbb{R}^{p_0}, \mathbf{z} \in \mathbb{R}^p} \{\xi_{i}^T \mathbf{z}: \mathbf{z} = \relu (\mathbf{Wz} + \mathbf{Ux} + \mathbf{u}), \,\mathbf{x} \in \mathcal{E}\} \tag{CertMON-$i$} \,.
	\end{align*}
	Using the semialgebraicity of both $\relu$ in \eqref{eq:mainEquivalence}, and set $\mathcal{E}$, problem \eqref{certmon} is a POP for all $i$. As discussed in Section \ref{notation}, one is able to derive a sequence of SDPs (Lasserre's relaxation) to obtain a converging serie of upper bounds of the optimal solution of \eqref{certmon}. For Robustness Model, we consider only the order-1 Lasserre's relaxation (Shor's relaxation), and denote by $\tilde{\delta}_i$ the upper bound of $\delta_i$ by Shor's relaxation, i.e., $\delta_i \le \tilde{\delta}_i$. Recall that if for all label $i$ different from $y_0$, we have $F(\mathbf{x})_i < F(\mathbf{x})_{y_0}$, then the label of $\mathbf{x}$ is still $y_0$. This justifies the following claim:
	
	\textbf{Certification criterion}: If $\tilde{\delta}_i < 0$ for all $i \ne y_0$, then the network $F$ is $\varepsilon$-robust at $\mathbf{x}_0$.
	
	Robustness Model for DNNs has already been investigated in \cite{Raghuathan18}, where the authors also use Shor's relaxation as we do. Different from DNNs, we only have one implicit layer in monDEQ. Therefore, the number of variables in problem \eqref{certmon} only depends on the number of input neurons $p_0$ and hidden neurons $p$. 
	
	\subsection{Lipschitz Model}\label{lip}
	We bound the Lipschitz constant of monDEQ with respect to input perturbation. Recall that the \emph{Lipschitz constant} of the vector-valued function $F$ (resp. $\mathbf{z}$) w.r.t. the  $L_q$ norm and input ball $\mathcal{S} \supset \mathcal{E}$, denoted by $L_{F, \mathcal{S}}^{q}$ (resp. $L_{\mathbf{z}, \mathcal{S}}^{q}$), is the smallest value of $L$ such that $\|F(\mathbf{x}) - F(\mathbf{y})\|_q \le L \|\mathbf{x} - \mathbf{y}\|_q$ (resp. $\|\mathbf{z} (\mathbf{x}) - \mathbf{z} (\mathbf{y})\|_q \le L \|\mathbf{x} - \mathbf{y}\|_q$) for all $\mathbf{x}, \mathbf{y} \in \mathcal{S}$. For $\mathbf{x},\mathbf{x}_0 \in \mathcal{S}$, with $\|\mathbf{x}-\mathbf{x}_0\|_q \leq \epsilon$, we can estimate the perturbation of the output as follows:
	\begin{align}
		& \|F(\mathbf{x}) - F(\mathbf{x}_0)\|_q \le L_{F, \mathcal{S}}^{q} \cdot \|\mathbf{x} - \mathbf{x}_0\|_q \le \varepsilon L_{F, \mathcal{S}}^{q} \label{lip_bound1} \,,\\
		& \|F(\mathbf{x}) - F(\mathbf{x}_0)\|_q \le  \vertiii{\mathbf{C}}_q \cdot L_{\mathbf{z}, \mathcal{S}}^{q} \cdot \|\mathbf{x} - \mathbf{x}_0\|_q \le \varepsilon \vertiii{\mathbf{C}}_q \cdot L_{\mathbf{z}, \mathcal{S}}^{q} \label{lip_bound2} \,.
	\end{align}
	The authors in \cite{pabbaraju2021estimating} use inequality \eqref{lip_bound2} with $q = 2$, as they provide an upper bound of $L_{\mathbf{z}, \mathcal{S}}^{2}$. In contrast, our model provides upper bounds on Lipschitz constants of $F$ or $\mathbf{z}$ for  arbitrary $L_q$ norms. We directly focus on estimating the value of $L_{F, \mathcal{S}}^{q}$ instead of $L_{\mathbf{z}, \mathcal{S}}^{q}$.
	
	Since the $\relu$ function is non-smooth, we define its \emph{subgradient}, denoted by $\partial\relu$, as the set-valued map $\partial\relu (x) = 0$ for $x < 0$, $\partial\relu (x) = 1$ for $x > 0$, and $\partial\relu (x) = [0,1]$ for $x = 0$. Similar to $\relu$ function, $\partial\relu$ is also semialgebraic.
	
	\textbf{Semialgebraicity of $\partial\relu$}: If $x, y \in \mathbb{R}$, we have $y \in \partial\relu (x)$, if and only if $y(y-1) \le 0, xy \ge 0, x(y-1) \ge 0$. If $\mathbf{x}, \mathbf{y} \in \mathbb{R}^n$, then $\partial\relu (\mathbf{x})$ denotes the coordinate-wise evaluation of $\partial\relu$. Going back to monDEQ, let $\mathbf{s}$ be any subgradient of the implicit variable: $\mathbf{s} \in \partial\relu (\mathbf{Wz} + \mathbf{Ux} + \mathbf{u})$. We can write equivalently a system of polynomial inequalities: 
	\begin{align} \label{relu'}
		\mathbf{s} (\mathbf{s} - 1) \leq 0, \,\mathbf{s} (\mathbf{Wz} + \mathbf{Ux} + \mathbf{u}) \ge 0, \,(\mathbf{s} - 1) (\mathbf{Wz} + \mathbf{Ux} + \mathbf{u}) \ge 0 \,.
	\end{align}
	For the following discussion, $\partial\relu$ will refer to the equivalent polynomial systems \eqref{relu'}.
	
	With the semialgebraicity of $\relu$ in \eqref{eq:mainEquivalence} and $\partial\relu$ in \eqref{relu'}, one is able to compute upper bounds of the Lipschitz constant of $F$ via POPs. The proof of the next Lemma is postponed to Appendix \ref{proof_lem}.
	
	\begin{lemma}\label{clip}
		Define 
		\begin{align*}\label{lipmonf}
			\tilde{L}_{F, \mathcal{S}}^{q} = & \max \{\mathbf{t}^T \mathbf{U}^T \mathbf{y}: \mathbf{t}, \mathbf{x} \in \mathbb{R}^{p_0}, \,\mathbf{s}, \mathbf{z}, \mathbf{y}, \mathbf{r} \in \mathbb{R}^p, \,\mathbf{v}, \mathbf{w} \in \mathbb{R}^K, \,\mathbf{x} \in \mathcal{S}, \\
			& \qquad\qquad \|\mathbf{t}\|_q \le 1, \,\mathbf{w}^T \mathbf{v} \le 1, \,\|\mathbf{w}\|_q \le 1, \,\mathbf{r} - \mathbf{W}^T \mathbf{y} = \mathbf{C}^T \mathbf{v}, \,\mathbf{y} = \diag (\mathbf{s}) \cdot \mathbf{r};  \\
			& \qquad\qquad \mathbf{s} \in \partial\relu (\mathbf{Wz} + \mathbf{Ux} + \mathbf{u}), \,\mathbf{z} = \relu (\mathbf{Wz} + \mathbf{Ux} + \mathbf{u})\} \,.\tag{LipMON}
		\end{align*}
		Then $\tilde{L}_{F, \mathcal{S}}^{q}$ is an upper bound of the Lipschitz constant of $F$ w.r.t. the $L_q$ norm, i.e., $L_{F, \mathcal{S}}^{q} \le \tilde{L}_{F, \mathcal{S}}^{q}$.
	\end{lemma}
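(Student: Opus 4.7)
The plan is to bound $L_{F,\mathcal{S}}^q$ by the operator norm of the Jacobian of $F$ at points of differentiability, reformulate that norm via Hölder duality and implicit differentiation of the monDEQ equation, and then exhibit a feasible tuple for (LipMON) whose objective equals the norm, so that the claimed inequality follows by taking suprema.

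First I would reduce the problem to the Jacobian. Since $\mathbf{z}(\cdot)$ is the unique solution of a strongly monotone fixed-point equation parametrized smoothly by $\mathbf{x}$, it is Lipschitz on $\mathcal{S}$, hence so is $F=\mathbf{C}\mathbf{z}(\cdot)+\mathbf{c}$. By Rademacher's theorem applied on the convex set $\mathcal{S}$, one has $L_{F,\mathcal{S}}^q=\sup_{\mathbf{x}}\vertiii{J_F(\mathbf{x})}_q$, the supremum running over points of differentiability of $F$. At such a point $\mathbf{x}$ the sign pattern of $\mathbf{Wz}+\mathbf{Ux}+\mathbf{u}$ is locally constant, so $\partial\relu$ collapses to a unique $\mathbf{s}\in\{0,1\}^p$, and implicit differentiation of $\mathbf{z}=\relu(\mathbf{Wz}+\mathbf{Ux}+\mathbf{u})$ yields
\[
J_{\mathbf{z}}(\mathbf{x})=(\mathbf{I}-\diag(\mathbf{s})\mathbf{W})^{-1}\diag(\mathbf{s})\mathbf{U},\qquad J_F(\mathbf{x})=\mathbf{C}\,J_{\mathbf{z}}(\mathbf{x}),
\]
where invertibility of $\mathbf{I}-\diag(\mathbf{s})\mathbf{W}$ is a consequence of the strong monotonicity $\mathbf{I}-\mathbf{W}\succeq m\mathbf{I}$.

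Second, I would unfold the operator norm by duality. Writing $\vertiii{J_F(\mathbf{x})}_q=\sup\{\mathbf{v}^{T}J_F(\mathbf{x})\mathbf{t}:\|\mathbf{t}\|_q\le 1,\ \|\mathbf{v}\|_{q^*}\le 1\}$ for $q^*$ the Hölder conjugate of $q$, I introduce the auxiliary variables
\[
\mathbf{r}:=(\mathbf{I}-\mathbf{W}^{T}\diag(\mathbf{s}))^{-1}\mathbf{C}^{T}\mathbf{v},\qquad \mathbf{y}:=\diag(\mathbf{s})\mathbf{r}.
\]
A direct computation shows that the linear relations $\mathbf{r}-\mathbf{W}^{T}\mathbf{y}=\mathbf{C}^{T}\mathbf{v}$ and $\mathbf{y}=\diag(\mathbf{s})\mathbf{r}$ of (LipMON) hold, and that $\mathbf{v}^{T}J_F(\mathbf{x})\mathbf{t}$ telescopes to the target objective $\mathbf{t}^{T}\mathbf{U}^{T}\mathbf{y}$.

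Third, I would verify feasibility of the constructed tuple $(\mathbf{t},\mathbf{x},\mathbf{s},\mathbf{z},\mathbf{y},\mathbf{r},\mathbf{v},\mathbf{w})$ in (LipMON): the relations for $\mathbf{z},\mathbf{s}$ are precisely the semialgebraic encodings \eqref{eq:mainEquivalence} and \eqref{relu'}, the equations on $\mathbf{r},\mathbf{y}$ hold by definition, $\|\mathbf{t}\|_q\le 1$ is assumed, and the dual bound $\|\mathbf{v}\|_{q^*}\le 1$ guarantees via Hölder that $\mathbf{w}^{T}\mathbf{v}\le 1$ for any $\mathbf{w}$ with $\|\mathbf{w}\|_q\le 1$, so one may append any such $\mathbf{w}$ to obtain a feasible point. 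Hence $\mathbf{v}^{T}J_F(\mathbf{x})\mathbf{t}\le \tilde{L}_{F,\mathcal{S}}^{q}$, and passing to suprema over $\mathbf{t},\mathbf{v},\mathbf{x}$ yields $L_{F,\mathcal{S}}^{q}\le\tilde{L}_{F,\mathcal{S}}^{q}$.

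The step I expect to be most delicate is ensuring that $\mathbf{I}-\diag(\mathbf{s})\mathbf{W}$ is invertible for every admissible sign pattern $\mathbf{s}\in[0,1]^{p}$ (needed to even speak of the Jacobian formula); this should follow from the strong-monotonicity condition on $\mathbf{I}-\mathbf{W}$ used in the monDEQ framework, but requires a short linear-algebra argument. A second, smaller technicality is handling the measure-zero kink set where $F$ is not differentiable, which does not contribute to the supremum but should be mentioned for rigor.
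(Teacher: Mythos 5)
Your overall strategy coincides with the paper's: bound $L_{F,\mathcal{S}}^{q}$ by the supremum of operator norms of (generalized) Jacobians, rewrite the operator norm by duality, and then use the substitution $\mathbf{r}=(\mathbf{I}-\mathbf{W}^{T}\diag(\mathbf{s}))^{-1}\mathbf{C}^{T}\mathbf{v}$, $\mathbf{y}=\diag(\mathbf{s})\mathbf{r}$ to turn $\mathbf{v}^{T}\mathbf{C}\mathbf{J}\mathbf{t}$ into $\mathbf{t}^{T}\mathbf{U}^{T}\mathbf{y}$ and exhibit a feasible point of \eqref{lipmonf}; your invertibility concern is settled exactly as you guess, via strong monotonicity (the paper cites Proposition 1 of the monDEQ paper). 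The telescoping computation and the feasibility check (including the $\mathbf{w}$ encoding of the dual-norm bound) are correct.

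The genuine gap is in your first step, at precisely the point where the paper expends all its effort. You claim that at a point of differentiability of $F$ ``the sign pattern of $\mathbf{Wz}+\mathbf{Ux}+\mathbf{u}$ is locally constant, so $\partial\relu$ collapses to a unique $\mathbf{s}\in\{0,1\}^{p}$.'' This is false in general: differentiability of the composite $\mathbf{z}$ at $\mathbf{x}$ does not force the pre-activation to have locally constant sign (a coordinate can vanish at $\mathbf{x}$, or on a whole stratum, while $\mathbf{z}$ remains differentiable), so the implicit-differentiation formula $J_{\mathbf{z}}=(\mathbf{I}-\diag(\mathbf{s})\mathbf{W})^{-1}\diag(\mathbf{s})\mathbf{U}$ is not justified pointwise as stated. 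Relatedly, your reduction $L_{F,\mathcal{S}}^{q}=\sup_{\mathbf{x}}\vertiii{J_F(\mathbf{x})}_q$ via Rademacher needs an argument for integrating along a segment that may lie entirely inside the measure-zero non-differentiability set; you flag this as a minor technicality, but it is not automatic. The paper closes both holes simultaneously: it uses semialgebraicity to get a closed null set off which $\mathbf{z}$ is $C^{1}$, applies the Clarke chain-rule \emph{inclusion} there to obtain $\mathbf{J}=\mathbf{Y}(\mathbf{W}\mathbf{J}+\mathbf{U})$ with $\mathbf{Y}=\diag(\mathbf{s})$, $\mathbf{s}\in\partial\relu(\cdot)$, extends this to every point by the closed-graph property of the Clarke Jacobian, and invokes conservativity of $\mathbf{C}\mathcal{J}^{C}_{\mathbf{z}}$ (via the cited results on conservative fields) to get the integration formula along \emph{every} segment. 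Your argument is repairable along these lines (or by an essential-supremum/Fubini variant together with a stratification ensuring constant sign of each pre-activation coordinate almost everywhere), but as written the pointwise Jacobian formula and the segment integration are asserted rather than proved.
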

	Since problem \eqref{lipmonf} is a POP, we also consider Shor's relaxation and denote by $\hat{L}_{F, \mathcal{S}}^{q}$ the upper bound of $\tilde{L}_{F, \mathcal{S}}^{q}$ by Shor's relaxation, i.e., $\tilde{L}_{F, \mathcal{S}}^{q} \le \hat{L}_{F, \mathcal{S}}^{q}$. Define $\delta := \varepsilon \hat{L}_{F, \mathcal{S}}^{q}$. By equations \eqref{lip_bound1}, \eqref{lip_bound2}, if $\mathcal{E} \subset \mathcal{S}$, using Lemma \ref{clip} and the fact that $\|\cdot\|_{\infty} \le \|\cdot\|_q$, we have $\|F(\mathbf{x}) - F(\mathbf{x}_0)\|_{\infty} \le \delta$, yielding the following criterion:
	
	\textbf{Certification criterion}: Let $y_0$ be the label of $\mathbf{x}_0$. Define $\tau := F(\mathbf{x}_0)_{y_0} - \max_{k \ne y_0} F(\mathbf{x}_0)_k$. If $2 \delta < \tau$, then the network $F$ is $\varepsilon$-robust at $\mathbf{x}_0$.
	
	\emph{Remark}: In order to avoid some possible numerical issues, we add some bound constraints and redundant constraints to problem \eqref{lipmonf}, see Appendix \ref{detail_lip} for details. Shor's relaxation of Lipschitz Model for DNNs has already been extensively investigated in \cite{fazlyab2019efficient, chen2020semi}. If one want to certify robustness for several input test examples, then one may choose $S$ to be a big ball containing all such examples with an additional margin of $\epsilon$.
	
	\subsection{Ellipsoid Model}\label{ellip}
	As above, the input region $\mathcal{E}$ is  a neighborhood of input $\mathbf{x}_0 \in \mathbb{R}^{p_0}$ with radius $\varepsilon$ for the $L_q$ norm, i.e., $\mathcal{E} = \{\mathbf{x} \in \mathbb{R}^p: \|\mathbf{x} - \mathbf{x}_0\|_q \le \varepsilon\}$. More generally, $\mathcal{E}$ could be other general semialgebraic sets, such as polytopes or zonotopes. Denote by $F(\mathcal{E})$ the image of $\mathcal{E}$ by $F$. In this section, we aim at finding a semialgebraic set $\mathcal{C}$ such that $F(\mathcal{E}) \subseteq \mathcal{C}$. We choose $\mathcal{C}$ to be an ellipsoid which can in turn be used for robustness certification. Our goal is to find such outer-approximation ellipsoid with minimum volume. Let $\mathcal{C} : = \{\xi \in \mathbb{R}^K: \|\mathbf{Q} \xi + \mathbf{b}\|_2 \le 1\}$ be an ellipsoid in the output space $\mathbb{R}^K$ parametrized by $\mathbf{Q} \in \mathbb{S}^K$ and $\mathbf{b} \in \mathbb{R}^K$, where $\mathbb{S}^K$ is the set of PSD matrices of size $K\times K$. The problem of finding the minimum-volume ellipsoid containing the image $F(\mathcal{E})$ can be formulated as
	\begin{align*} \label{ellippop}
		\max_{\mathbf{Q} \in \mathbb{S}^K, \mathbf{b} \in \mathbb{R}^K} \{ \det (\mathbf{Q}): \mathbf{x} \in \mathcal{E}, \,\|\mathbf{Q} (\mathbf{Cz} + \mathbf{c}) + \mathbf{b}\|_2 \le 1, \,\mathbf{z} = \relu (\mathbf{Wz} + \mathbf{Ux} + \mathbf{u})\} \tag{EllipMON-POP} \,.
	\end{align*}
	By semialgebraicity of both $\relu$ in \eqref{eq:mainEquivalence} and $\mathcal{C}$, problem \eqref{ellippop} can be cast as a POP (the determinant being a polynomial). We no longer apply Shor's relaxation, we rather replace the non-negativity output constraint $1 - \|\mathbf{Q} (\mathbf{Cz} + \mathbf{c}) + \mathbf{b}\|_2 \ge 0$ by a stronger Putinar's certificate related to both ReLU and input constraints. We can then relax the non-convex problem \eqref{ellippop} to a problem with SOS constraints, which can be reformulated by (convex) SDP constraints. 
	This is due to the fact that a polynomial $f$ of degree at most $2 d$ is SOS if and only if there exists a PSD matrix  $\mathbf{M}$ (called a \emph{Gram matrix}) such that  $f(\mathbf{x}) = \mathbf{v}(\mathbf{x})^T \mathbf{M} \mathbf{v}(\mathbf{x})$, for all $\mathbf{x} \in \mathbb{R}^p$, with $\mathbf{v}(\mathbf{x})$ being the vector containing all monomials of degree at most $d$. 
	In summary, we relax problem \eqref{ellippop} to an SOS constrained problem keeping the determinant unchanged:
	\begin{align*}\label{ellipsos}
		& \max_{\mathbf{Q} \in \mathbb{S}^K, \mathbf{b} \in \mathbb{R}^K} \{ \det (\mathbf{Q}): 1 - \|\mathbf{Q} (\mathbf{Cz} + \mathbf{c}) + \mathbf{b}\|_2^2 = \sigma_0 (\mathbf{x}, \mathbf{z}) + \sigma_1 (\mathbf{x}, \mathbf{z})^T g_q(\mathbf{x} - \mathbf{x}_0) \\
		& \qquad\qquad + \tau (\mathbf{x}, \mathbf{z})^T (\mathbf{z} (\mathbf{z} - \mathbf{Wz} - \mathbf{Ux} - \mathbf{u})) + \sigma_{2} (\mathbf{x}, \mathbf{z})^T (\mathbf{z} - \mathbf{Wz} - \mathbf{Ux} - \mathbf{u}) + \sigma_{3} (\mathbf{x}, \mathbf{z})^T \mathbf{z}\} \,. \tag{EllipMON-SOS-$d$}
	\end{align*}
	where $g_q (\mathbf{x}) = \varepsilon^q - \|\mathbf{x}\|_q^q$ for $q < +\infty$ and $g_q (\mathbf{x}) = \varepsilon^2 - |\mathbf{x}|^2$ for $q = +\infty$, $\sigma_0$ is a vector of SOS polynomials of degree at most $2d$, $\sigma_1$ is a vector of SOS polynomials of degree at most $2(d- \lceil q /2 \rceil)$ for $q < +\infty$ and $2d-2$ for $q = +\infty$, $\sigma_2, \sigma_3$ are vectors of SOS polynomials of degrees at most $2d-2$, $\tau$ is a vector of polynomials of degree at most $2d-2$. Problem \eqref{ellipsos} provides an ellipsoid feasible for \eqref{ellippop}, that is an ellipsoid which contains $F(\mathcal{E})$. In practice, the determinant is replaced by a log-det objective because there exist efficient solver dedicated to optimize such objectives on SDP constraints. By increasing the relaxation order $d$ in problem \eqref{ellipsos}, one is able to obtain a hierarchy of log-det objected SDP problems for which the outer-approximation ellipsoids have decreasing volumes. 
	
	In this paper, we only consider the case $p = 2, \infty$, and order-1 relaxation ($d = 1$). Therefore, $\sigma_i$ and $\tau$ are all (vectors of) real (non-negative) numbers for $i = 1, 2, 3$, except that $\sigma_0$ is an SOS polynomial of degree at most 2. In this case, problem \eqref{ellipsos} is equivalent to a problem with log-det objective and SDP constraints, as the following lemma states (proof postponed to Appendix \ref{ellipsoid}):
	\begin{lemma}\label{ellipmon}
		For $p = 2$ or $p = \infty$, problem \eqref{ellipsos} with $d = 1$ is equivalent to
		\begin{align*}\label{ellipsdp}
			\max_{\mathbf{Q} \in \mathbb{S}^K, \mathbf{b} \in \mathbb{R}^K, \sigma_1, \sigma_2, \sigma_3 \ge 0, \tau \in \mathbb{R}^p} \{\log\det (\mathbf{Q}): -\mathbf{M} \succeq 0\} \tag{EllipMON-SDP} \,.
		\end{align*}
		where $\mathbf{M} \in \mathbb{S}^{(p_0+p+1)\times (p_0+p+1)}$ is a symmetric matrix parametrized by the decision variables $(\mathbf{Q}, \mathbf{b})$, the coefficients $(\sigma_1, \sigma_2, \sigma_3, \tau)$, and the parameters of the network $(\mathbf{W}, \mathbf{U}, \mathbf{u}, \mathbf{C}, \mathbf{c})$.
	\end{lemma}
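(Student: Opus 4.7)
My plan is to establish the equivalence by explicitly reading off the Gram matrix associated with each term of the polynomial identity in (EllipMON-SOS-$1$), once the order-$1$ relaxation is imposed. Introduce the augmented monomial vector $\mathbf{v} := (1, \mathbf{x}^\top, \mathbf{z}^\top)^\top \in \mathbb{R}^{1+p_0+p}$. Since $\sigma_0$ is an SOS polynomial of degree at most $2$, the Gram decomposition recalled in Section~\ref{notation} yields $\sigma_0(\mathbf{x},\mathbf{z}) = \mathbf{v}^\top \mathbf{M}_0 \mathbf{v}$ for a unique symmetric PSD matrix $\mathbf{M}_0$ of size $p_0+p+1$, and conversely any such decomposition with $\mathbf{M}_0 \succeq 0$ corresponds to a valid SOS $\sigma_0$. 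Thus the polynomial identity in (EllipMON-SOS-$1$) is equivalent to the statement that its right-hand side minus $\sigma_0$, viewed as a quadratic form in $\mathbf{v}$, admits a PSD Gram matrix.

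I then compute the Gram matrix contribution of each remaining term. The part $1 - \|\mathbf{Q}(\mathbf{Cz}+\mathbf{c}) + \mathbf{b}\|_2^2$ contributes $\mathbf{e}_1\mathbf{e}_1^\top - \mathbf{B}^\top \mathbf{B}$, where $\mathbf{B} := [\,\mathbf{Q}\mathbf{c}+\mathbf{b},\ \mathbf{0}_{K\times p_0},\ \mathbf{Q}\mathbf{C}\,]$ is $K\times(1+p_0+p)$. The ReLU multipliers $\tau^\top(\mathbf{z}(\mathbf{z}-\mathbf{Wz}-\mathbf{Ux}-\mathbf{u}))$, $\sigma_2^\top(\mathbf{z}-\mathbf{Wz}-\mathbf{Ux}-\mathbf{u})$, and $\sigma_3^\top \mathbf{z}$ each have degree at most $2$, so they translate directly into symmetric blocks whose entries are linear functions of $\tau,\sigma_2,\sigma_3$ and of the network parameters $(\mathbf{W},\mathbf{U},\mathbf{u})$. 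For the ball multiplier $\sigma_1 \cdot g_q(\mathbf{x}-\mathbf{x}_0)$ the two cases split: when $q=2$, $\sigma_1 \geq 0$ is a scalar and $\sigma_1(\varepsilon^2 - \|\mathbf{x}-\mathbf{x}_0\|_2^2)$ contributes a quadratic form in $\mathbf{x}$; when $q=\infty$, $\sigma_1 \in \mathbb{R}_+^{p_0}$ and $\sum_{j=1}^{p_0}\sigma_{1,j}(\varepsilon^2-(x_j-x_{0,j})^2)$ still yields a quadratic form in $\mathbf{v}$, now diagonal in the $\mathbf{x}$-block. In both cases the contribution is linear in the multipliers.

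Summing all the contributions defines a symmetric matrix $\mathbf{M} \in \mathbb{S}^{p_0+p+1}$, depending linearly on $(\sigma_1,\sigma_2,\sigma_3,\tau,\mathbf{b})$ and on the entries of $\mathbf{Q}$ through $\mathbf{B}$, such that the polynomial identity in (EllipMON-SOS-$1$) forces $\mathbf{M}_0 = -\mathbf{M}$. The SOS condition on $\sigma_0$ is therefore equivalent to the single matrix constraint $-\mathbf{M} \succeq 0$. Finally, maximizing $\det(\mathbf{Q})$ over $\mathbf{Q} \in \mathbb{S}^K$ is monotonically equivalent to maximizing the concave $\log\det(\mathbf{Q})$, a standard objective for modern SDP solvers; combining this with $-\mathbf{M}\succeq 0$ gives exactly (EllipMON-SDP).

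The main obstacle is careful bookkeeping: writing out the nine blocks of $\mathbf{M}$ indexed by $(1,\mathbf{x},\mathbf{z})$ in a consistent orientation, and checking that matching every monomial coefficient on both sides of (EllipMON-SOS-$1$) reproduces exactly the single matrix equality $\mathbf{M}_0 + \mathbf{M} = 0$. A minor subtlety is that the block $\mathbf{B}^\top\mathbf{B}$ depends quadratically on the decision variable $\mathbf{Q}$; this is standard and, via a Schur complement in $\mathbf{B}$, the constraint can be recast as a genuine linear matrix inequality in $(\mathbf{Q},\mathbf{b},\sigma_1,\sigma_2,\sigma_3,\tau)$, but the statement of the lemma only requires exhibiting $\mathbf{M}$ as a symmetric matrix parametrized by these decision variables and the network weights.
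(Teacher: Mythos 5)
Your proposal is correct and follows essentially the same route as the paper's proof: identify the unique symmetric Gram matrix of each term of the order-$1$ SOS identity in the basis $(1,\mathbf{x},\mathbf{z})$, sum them into a single matrix $\mathbf{M}$ that is linear in $(\sigma_1,\sigma_2,\sigma_3,\tau,\mathbf{b})$ and quadratic in $\mathbf{Q}$, and observe that the SOS condition on $\sigma_0$ is exactly $-\mathbf{M}\succeq 0$, with the quadratic dependence on $\mathbf{Q}$ handled by a Schur complement. The only differences are cosmetic: the paper writes out all the blocks $\mathbf{M}_1,\dots,\mathbf{M}_5$ explicitly and additionally records an optional slope-restriction strengthening $\mathbf{M}_6$ not needed for the equivalence itself.
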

	Since the outer-approximation ellipsoid $\mathcal{C} = \{\xi \in \mathbb{R}^K: \|\mathbf{Q} \xi + \mathbf{b}\|_2 \le 1\}$ contains the image $F(\mathcal{E})$, i.e., all possible outputs of the input region $\mathcal{E}$, one is able to certify  robustness by solving the following optimization problems.
	
	\textbf{Certification criterion}: Let $y_0$ be the label of $\mathbf{x}_0$. For $i \ne y_0$, define $\delta_{i} : = \max_{\xi \in \mathbb{R}^K} \{\xi_i - \xi_{y_0}: \|\mathbf{Q} \xi + \mathbf{b}\|_2 \le 1\}$.
	If $\delta_{i} < 0$ for all $i\ne y_0$, then the network $F$ is $\varepsilon$-robust at $\mathbf{x}_0$.
	
	The certification criterion for Ellipsoid Model has a geometric explanation: for $i \ne y_0$, denote by $\mathcal{P}_i$ the projection map from output space $\mathbb{R}^K$ to its 2-dimensional subspace $\mathbb{R}_{y0} \times \mathbb{R}_i$, i.e., $\mathcal{P}_i (\xi) = [\xi_{y_0}, \xi_i]^T$ for all $\xi \in \mathbb{R}^K$. Let $\mathcal{L}_i$ be the line in subspace $\mathbb{R}_{y0} \times \mathbb{R}_i$ defined by $\{[\xi_{y_0}, \xi_i]^T \in \mathbb{R}_{y0} \times \mathbb{R}_i: \xi_{y_0} = \xi_i\}$. Then the network $F$ is $\varepsilon$-robust if and only if the projection $\mathcal{P}_i (\mathcal{C})$ lies strictly below the line $\mathcal{L}_i$ for all $i \ne y_0$. We give an explicit example in Section \ref{outer} to visually illustrate this.
	
	\subsection{Summary of the Models}
	\label{sec:summaryModels}
	We have already presented three models which can all be dedicated to certify robustness of neural networks. However, the size and complexity of each model are different. We summarize the number of variables in each model and the maximum size of PSD matrices in the resulting Shor's relaxation, see Table \ref{table_sum}. The complexity of our models only depends on the number of neurons in the input layer and implicit layer. The size of PSD matrices is a limiting factor for SDP solvers, our models are practically restricted to network for which such size can be handled by SDP solvers. For the popular dataset MNIST \cite{lecun2010mnist}, whose input dimension is $28\times 28 = 784$, we are able to apply our model on monDEQs with moderate size implicit layers (87) and report the corresponding computation time. 
	\begin{table}[ht]
		\caption{Summary of the number of variables the three models}
		\label{table_sum}
		\centering
		\begin{tabular}{cccc}
			\toprule
			& Robustness Model & Lipschitz Model & Ellipsoid Model \\
			\midrule
			Num. of variables & $p_0+p$ & $2p_0 + 4p+ 2K$ & $p_0+ 3p + K + K^2$ \\
			Max. size of PSD matrices & $1+p_0+p$ & $1+2p_0 + 4p+ 2K$ & $1+p_0+p$\\
			\bottomrule
		\end{tabular}
	\end{table}
	
	\section{Experiments}\label{results}
	In this section, we present the experimental results of Robustness Model, Lipschitz Model and Ellipsoid Model described in Section \ref{model} for a pretrained monDEQ on MNIST dataset. The network we use consists of a fully-connected implicit layer with 87 neurons and we set its monotonicity parameter $m$ to be 20. The training hyperparameters are set to be the same as in Table D1 of \cite{winston2020monotone}, where the training code (in Python) is available at \url{https://github.com/locuslab/monotone_op_net}. Training is based on the normalized MNIST database in \cite{winston2020monotone}, we use the same normalization setting on each test example with mean $\mu = 0.1307$ and standard deviation $\sigma = 0.3081$, which means that each input is an image of size $28 \times 28$ with entries varying from $-0.42$ to $2.82$. And for every perturbation $\varepsilon$, we also take the normalization into account, i.e., we use the normalized perturbation $\varepsilon / \sigma$ for each input.
	
	Since all our three models can be applied to certify robustness of neural networks, we first compare the performance of each model in certification of the first 100 test MNIST examples. Then we compare the upper bounds of Lipschitz Model with the upper bounds proposed in \cite{pabbaraju2021estimating}. Finally we show that Ellipsoid Model can also be applied for reachability analysis. For Certification model and Lipschitz model, we implement them in Julia \cite{bezanson2017julia} with JuMP \cite{dunning2017jump} package; for Ellipsoid model, we implement it in Matlab \cite{sharma2009matlab} with CVX \cite{grant2014cvx} package. For all the three models, we use Mosek \cite{mosek2015mosek} as a backend to solve the targeted POPs. All experiments are performed on a personal laptop with an Intel 8-Core i7-8665U CPU @ 1.90GHz Ubuntu 18.04.5 LTS, 32GB RAM. The code of all our models is available at \url{https://github.com/NeurIPS2021Paper4075/SemiMonDEQ}. 
	
	\subsection{Robustness certification}
	We consider $\varepsilon = 0.1$ for the $L_2$ norm and $\varepsilon = 0.1, 0.05, 0.01$ for the $L_{\infty}$ norm. For each model, we compute the ratio of certified test examples among the first 100 test inputs. Following \cite{pabbaraju2021estimating}, we also compute the \emph{projected gradient descent (PGD)} attack accuracy using Foolbox library \cite{rauber2020foolbox}, which indicates the ratio of non-successful attacks among our 100 inputs. Note that the ratio of certified examples should always be less or equal than the ratio of non-successful attacks. 
	The gaps between them shows how many test examples there are for which we are neither able to certify robustness nor find adversarial attacks.
	
	\emph{Remark}: For Lipschitz Model, we use inequality \eqref{lip_bound1} to test robustness, i.e., we compute directly the upper bound of the Lipschitz constant of $F$ rather than $\mathbf{z}$ (seen as a function of $\mathbf{x}$) where $\mathcal{S}$ is a big ball containing all test examples.
	
	From Table \ref{table_cert}, we see that the monDEQ is robust to all the 100 test examples for the $L_2$ norm and $\varepsilon = 0.1$ (the only example that we can not certify is because the label itself is wrong). However, it is not robust for the $L_{\infty}$ norm at the same level of perturbation (all our three models cannot certify any examples, and the PGD algorithm finds adversarial examples for 85\% of the inputs. The network becomes robust again for the $L_{\infty}$ norm when we reduce the perturbation $\varepsilon$ to 0.01. 
	Overall, we see that Robustness Model is the best model as it provides the highest ratio, Ellipsoid Model is the second best model compared to Robustness Model, and Lipschitz Model provides the lowest ratio. 
	As a trade-off, for each test example, Robustness Model requires to consider at most 9 optimization problems, each one being solved in  around 150 seconds, while Ellipsoid Model requires to consider only one problem, which is solved in around 500 seconds. 
	We only need to calculate one (global) Lipschitz constant, which takes around 1500 seconds, so that we are able to certify any number of inputs. Each model we propose provide better or equal certification accuracy compared to \cite{pabbaraju2021estimating}, and significant improvements for $L^\infty$ perturbations.
	\begin{table}[ht]
		\caption{Ratio of certified test examples and running time per example by different methods. We consider $L_2$ norm with $\varepsilon = 0.1$ and $L_{\infty}$ norm with $\varepsilon = 0.1, 0.05, 0.01$. The ratio is based on the first 100 MNIST test examples, and we count the average computation time (with unit second) for one example of each method. The ratio in parentheses of the column ``Lipschitz Model'' are computed by the Lipschitz constant given in \cite{pabbaraju2021estimating} (see Section \ref{sec:compareLispchitz} for details). Exact binomial 95\% confidence intervals are given in bracket.}
		\label{table_cert}
		\centering
		\begin{tabular}{cccccc}
			\toprule
			\multirow{2}{*}{Norm} & \multirow{2}{*}{$\varepsilon$} & Robustness Model & Lipschitz Model & Ellipsoid Model & \multirow{2}{*}{PGD Attack}\\
			& & (1350s / example) & (1500s in total) & (500s / example) &\\
			\midrule
			$L_2$ & $0.1$ & 99\% [>94]& 91\% (91\% ) [>83] & 99\%[>94]& 99\%[>94]\\
			\midrule
			\multirow{3}{*}{$L_{\infty}$} & $0.1$ & 0\% [<4] & 0\% (0\%) [<4]& 0\%[<4]& 15\% [8, 24]\\
			\cline{2-6}
			& $0.05$ & 24\% [16, 34] & 0\% (0\%) [<4] & 0\% [<4]& 82\% [73, 89]\\
			\cline{2-6}
			& $0.01$ & 99\% [>94] & 24\% [16, 34] (0\% [<4]) & 92\% [>84] & 99\% [>94]  \\
			\bottomrule
		\end{tabular}
	\end{table}
	
	Figure \ref{7} in Appendix \ref{adv} shows the original image of the first test example (\ref{7a}) and an adversarial attack (\ref{7b}) for the $L_{\infty}$ norm with $\varepsilon = 0.1$ found by the PGD algorithm in \cite{rauber2020foolbox}.

	\subsection{Comparison with Lipschitz constants}\label{sec:compareLispchitz}
	In this section, we compare the upper bounds of Lipschitz constants computed by Lipschitz Model with the upper bounds proposed in \cite{pabbaraju2021estimating}. Notice that the upper bounds in \cite{pabbaraju2021estimating} only involve the function $\mathbf{z} (\mathbf{x})$, hence we are only able to use inequality \eqref{lip_bound2} to test robustness. In fact the quantity $\vertiii{\mathbf{C}}_q \cdot L_{\mathbf{z}, \mathcal{S}}^{q}$ can be regarded as an upper bound of $L_{F, \mathcal{S}}^{q}$, the Lipschitz constant of $F$. 
	We denote by $\textbf{UB}_{\mathbf{z}}^{2}$ the upper bound of the Lipschitz constant of $\mathbf{z}$ w.r.t. the $L_2$ norm, given by $\textbf{UB}_{\mathbf{z}}^{2} = \vertiii{\mathbf{U}}_2 /m$ according to \cite{pabbaraju2021estimating}, where $\mathbf{U}$ is the parameter of the network and $m$ is the monotonicity factor.
	We can then compute the upper bound w.r.t. the $L_{\infty}$ norm by $\textbf{UB}_{z}^{\infty} = \sqrt{p_0} \cdot \textbf{UB}_{z}^{2}$ where $p_0$ is the input dimension. The upper bound of Lipschitz constant of $F$ is computed via the upper bound of $\mathbf{z}$: $\textbf{UB}_{F}^{q} = \vertiii{\mathbf{C}}_q \cdot \textbf{UB}_{\mathbf{z}}^{q}$. Denote similarly by $\textbf{SemiUB}_{F}^{q}$ the upper bounds of Lipschitz constants of $F$ provided by Lipschitz Model, w.r.t. the $L_q$ norm.
	\begin{table}[ht]
		\caption{Comparison of upper bounds of Lipschitz constant for $L_2$ and $L_{\infty}$ norm, and the corresponding computation time (with unit second).}
		\label{lip_compare}
		\centering
		\begin{tabular}{cccccc}
			\toprule
			& \multicolumn{2}{c}{$q = 2$} && \multicolumn{2}{c}{$q = \infty$}\\
			\cline{2-3}\cline{5-6}
			& bound & time (s) && bound & time (s) \\
			\midrule
			$\textbf{UB}^{q}_F$ & 4.80 & - && 824.14 & - \\
			$\textbf{SemiUB}^{q}_F$ & 4.67 & 1756.58 && 108.84 & 1898.65 \\
			\bottomrule
		\end{tabular}
	\end{table}
	
	From Table \ref{lip_compare}, we see that Lipschitz Model provides consistently tighter upper bounds than the ones in \cite{pabbaraju2021estimating}. Especially for $L_{\infty}$ norm, the upper bound computed by $\vertiii{\mathbf{C}}_{\infty} \cdot \textbf{UB}_{\mathbf{z}}^{\infty}$ is rather crude compared to the bound obtained directly by Lipschitz Model. Therefore, we are able to certify more examples using $\textbf{SemiUB}_{F}^{q}$ than $\textbf{UB}_{F}^{q}$, see Table \ref{table_cert}.
	
	\subsection{Outer ellipsoid approximation} \label{outer}
	In this section, we provide a visible illustration of how Ellipsoid Model can be applied to certify robustness of neural networks. Recall that Ellipsoid Model computes a minimum-volume ellipsoid $\mathcal{C} = \{\xi \in \mathbb{R}^K: \|\mathbf{Q} \xi + \mathbf{b}\|_2 \le 1\}$ in the output space $\mathbb{R}^K$ that contains the image of the input region $\mathcal{E}$ by $F$, i.e., $F(\mathcal{E}) \subseteq \mathcal{C}$. 
	The certification criterion for Ellipsoid Model claims that the network $F$ is $\varepsilon$-robust at input $\mathbf{x}_0$ if and only if the projection of $\mathcal{C}$ onto $\mathbb{R}_{y0} \times \mathbb{R}_i$ lies below the line $\mathcal{L}_i$ for all labels $i \ne y_0$.
	
	Take the first MNIST test example (which is classified as 7) for illustration. For $\varepsilon = 0.1$, this example is certified to be robust for the $L_2$ norm but not for the $L_{\infty}$ norm. 
	We show the landscape of the projections onto $\mathbb{R}_7 \times \mathbb{R}_3$, i.e., the $x$-axis indicates label 7 and the $y$-axis indicates label 3. In Figure \ref{reach}, the red points are projections of points in the image $F(\mathcal{E})$, for $\mathcal{E}$ an $L_2$ or $L_{\infty}$ norm perturbation zone, the black circles are projections of some (successful and unsuccessful) adversarial examples found by the PGD algorithm. Notice that the adversarial examples also lie in the image $F(\mathcal{E})$. The blue curve is the boundary of the projection of the outer-approximation ellipsoid (which is an ellipse), and the blue dashed line plays the role of a  certification threshold. 
	Figure \ref{reach1} shows the landscape for the $L_2$ norm, we see that the ellipse lies strictly below the threshold line, which means that for all points $\xi \in \mathcal{C}$, we have $\xi_3 < \xi_7$. Hence for all $\xi \in F(\mathcal{E})$, we also have $\xi_3 < \xi_7$. 
	On the other hand, for the $L_{\infty}$ norm, we see from Figure \ref{reach2} that the threshold line crosses the ellipse, which means that we are not able to certify robustness of this example by Ellipsoid Model. 
	Indeed, we can find adversarial examples with the PGD algorithm, as shown in Figure \ref{reach2} by the black circles that lie above the threshold line. The visualization of one of the attack examples is shown in Figure \ref{7} in Appendix \ref{adv}.
	\begin{figure}[ht]
		\centering
		\begin{subfigure}[t]{0.49\textwidth}
			\centering
			\includegraphics[width=\textwidth]{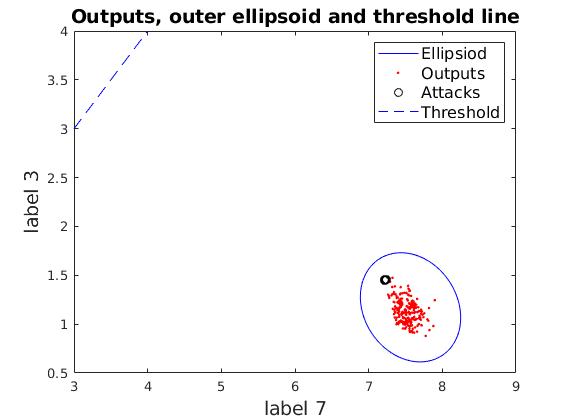}
			\caption{Certified example for the $L_2$ norm}
			\label{reach1}
		\end{subfigure}
		\hfill
		\begin{subfigure}[t]{0.49\textwidth}
			\centering
			\includegraphics[width=\textwidth]{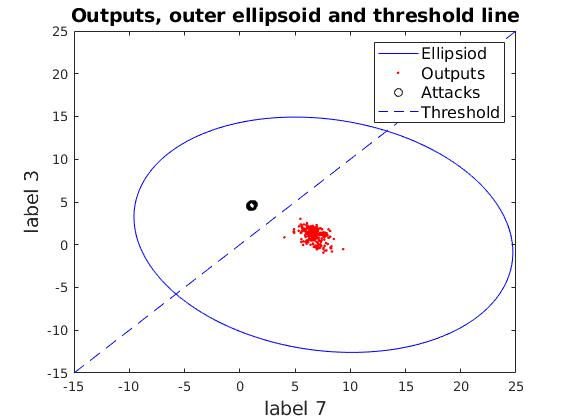}
			\caption{Non-certified example for the $L_{\infty}$ norm}
			\label{reach2}
		\end{subfigure}
		\caption{Visualization of the outer-approximation ellipsoids and outputs with $\varepsilon = 0.1$ for $L_2$ norm (left) and $L_{\infty}$ norm (right). The red points are image of the input region, the blue curve is the ellipsoid we compute, the blue dashed line is the threshold line used for certifying robustness of inputs, and the black circles are attack examples found by PGD algorithm.}
		\label{reach}
	\end{figure}
	
	\section{Conclusion and Future Works}
	\label{sec:conclusion}
	In this paper, we introduce semialgebraic representations of monDEQ and propose several POP models that are useful for certifying robustness, estimating Lipschitz constants and computing outer-approximation ellipsoids. For each model, there are several hierarchies of relaxations that allow us to improve the results by increasing the relaxation order. Even though we simply consider the order-1 relaxation, we obtain tighter upper bounds of Lipschitz constants compared to the results in \cite{pabbaraju2021estimating}. Consequently, we are able to certify robustness of more examples.
	
	Our models are based on SDP relaxation, hence requires an efficient SDP solver. However, the stat-of-the-art SDP solver Mosek (by interior-point method) can only handle PSD matrices of moderate size (smaller than 5000). This is the main limitation of our method if the dimension of the input gets larger. Moreover, we only consider the fully-connected monDEQ based on MNIST datasets for illustration. One important and interesting future work is to generalize our model to single and multi convolutional monDEQ, and to other datasets such as CIFAR \cite{krizhevsky2009learning} and SVHN \cite{netzer2011reading}. A convolutional layer can be regarded as a fully-connected layer with a larger sparse weight matrix. Hence one is able to build similar models via sparse polynomial optimization tools.
	
	The authors in \cite{pabbaraju2021estimating} showed that we can train DEQs with small Lipschitz constants for the $L_2$ norm, by controlling the monotonicity of the weight matrix. This guarantees the robustness of monDEQ w.r.t. the $L_2$ norm but not for the $L_{\infty}$ norm. A natural investigation track is to adapt this training technique to the $L_{\infty}$ norm with a better control of the associated Lipschitz constant.

	\begin{ack}
		This work has benefited from the Tremplin ERC Stg Grant ANR-18-ERC2-0004-01 (T-COPS project), the European Union's Horizon 2020 research and innovation programme under the Marie Sklodowska-Curie Actions, grant agreement 813211 (POEMA) as well as from the AI Interdisciplinary Institute ANITI funding, through the French ``Investing for the Future PIA3'' program under the Grant agreement n$^{\circ}$ANR-19-PI3A-0004. The third author was supported by the FMJH Program PGMO (EPICS project) and  EDF, Thales, Orange et Criteo.
		The fourth author acknowledge the support of Air Force Office of Scientific Research, Air Force Material Command, USAF, under grant numbers FA9550-19-1-7026, FA9550-18-1-0226, and ANR MasDol.
	\end{ack}
	
	\bibliographystyle{plain}
	\bibliography{monDEQ}

\begin{thebibliography}{10}

\bibitem{veg2021deep}
Vegard Antun, Nina~M. Gottschling, Anders~C. Hansen, and Ben Adcock.
\newblock Deep learning in scientific computing: Understanding the instability
  mystery.
\newblock {\em SIAM NEWS MARCH 2021}, 2021.

\bibitem{bai2019deep}
Shaojie Bai, J.~Zico Kolter, and Vladlen Koltun.
\newblock Deep equilibrium models.
\newblock In H.~Wallach, H.~Larochelle, A.~Beygelzimer, F.~d\textquotesingle
  Alch\'{e}-Buc, E.~Fox, and R.~Garnett, editors, {\em Advances in Neural
  Information Processing Systems}, volume~32. Curran Associates, Inc., 2019.

\bibitem{bai2020multiscale}
Shaojie Bai, Vladlen Koltun, and J.~Zico Kolter.
\newblock Multiscale deep equilibrium models.
\newblock In H.~Larochelle, M.~Ranzato, R.~Hadsell, M.~F. Balcan, and H.~Lin,
  editors, {\em Advances in Neural Information Processing Systems}, volume~33,
  pages 5238--5250. Curran Associates, Inc., 2020.

\bibitem{bezanson2017julia}
Jeff Bezanson, Alan Edelman, Stefan Karpinski, and Viral~B Shah.
\newblock Julia: A fresh approach to numerical computing.
\newblock {\em SIAM review}, 59(1):65--98, 2017.

\bibitem{bolte2020conservative}
J{\'e}r{\^o}me Bolte and Edouard Pauwels.
\newblock Conservative set valued fields, automatic differentiation, stochastic
  gradient methods and deep learning.
\newblock {\em Mathematical Programming}, pages 1--33, 2020.

\bibitem{boopathy2019cnn}
Akhilan Boopathy, Tsui-Wei Weng, Pin-Yu Chen, Sijia Liu, and Luca Daniel.
\newblock Cnn-cert: An efficient framework for certifying robustness of
  convolutional neural networks.
\newblock In {\em Proceedings of the AAAI Conference on Artificial
  Intelligence}, volume~33, pages 3240--3247, 2019.

\bibitem{chen2020semi}
Tong Chen, Jean~B Lasserre, Victor Magron, and Edouard Pauwels.
\newblock Semialgebraic optimization for lipschitz constants of relu networks.
\newblock In H.~Larochelle, M.~Ranzato, R.~Hadsell, M.~F. Balcan, and H.~Lin,
  editors, {\em Advances in Neural Information Processing Systems}, volume~33,
  pages 19189--19200. Curran Associates, Inc., 2020.

\bibitem{clarke1990optimization}
F.~H. Clarke.
\newblock {\em {Optimization and Nonsmooth Analysis}}.
\newblock Wiley New York, 1983.

\bibitem{dunning2017jump}
Iain Dunning, Joey Huchette, and Miles Lubin.
\newblock Jump: A modeling language for mathematical optimization.
\newblock {\em SIAM review}, 59(2):295--320, 2017.

\bibitem{morari2019proba}
Mahyar Fazlyab, Manfred Morari, and George~J. Pappas.
\newblock Probabilistic verification and reachability analysis of neural
  networks via semidefinite programming.
\newblock In {\em 2019 IEEE 58th Conference on Decision and Control (CDC)},
  pages 2726--2731, 2019.

\bibitem{fazlyab2019efficient}
Mahyar Fazlyab, Alexander Robey, Hamed Hassani, Manfred Morari, and George
  Pappas.
\newblock Efficient and accurate estimation of lipschitz constants for deep
  neural networks.
\newblock In H.~Wallach, H.~Larochelle, A.~Beygelzimer, F.~d\textquotesingle
  Alch\'{e}-Buc, E.~Fox, and R.~Garnett, editors, {\em Advances in Neural
  Information Processing Systems}, volume~32. Curran Associates, Inc., 2019.

\bibitem{gehr2018ai2}
Timon Gehr, Matthew Mirman, Dana Drachsler-Cohen, Petar Tsankov, Swarat
  Chaudhuri, and Martin Vechev.
\newblock Ai2: Safety and robustness certification of neural networks with
  abstract interpretation.
\newblock In {\em 2018 IEEE Symposium on Security and Privacy (SP)}, pages
  3--18. IEEE, 2018.

\bibitem{goodfellow2014explaining}
Ian Goodfellow, Jonathon Shlens, and Christian Szegedy.
\newblock Explaining and harnessing adversarial examples.
\newblock In {\em International Conference on Learning Representations}, 2015.

\bibitem{grant2014cvx}
Michael Grant and Stephen Boyd.
\newblock Cvx: Matlab software for disciplined convex programming, version 2.1,
  2014.

\bibitem{hein2017formal}
Matthias Hein and Maksym Andriushchenko.
\newblock Formal guarantees on the robustness of a classifier against
  adversarial manipulation.
\newblock In I.~Guyon, U.~V. Luxburg, S.~Bengio, H.~Wallach, R.~Fergus,
  S.~Vishwanathan, and R.~Garnett, editors, {\em Advances in Neural Information
  Processing Systems}, volume~30. Curran Associates, Inc., 2017.

\bibitem{henrion2009gloptipoly}
Didier Henrion, Jean-Bernard Lasserre, and Johan L{\"o}fberg.
\newblock Gloptipoly 3: moments, optimization and semidefinite programming.
\newblock {\em Optimization Methods \& Software}, 24(4-5):761--779, 2009.

\bibitem{hu2020reach}
Haimin Hu, Mahyar Fazlyab, Manfred Morari, and George~J. Pappas.
\newblock Reach-sdp: Reachability analysis of closed-loop systems with neural
  network controllers via semidefinite programming.
\newblock In {\em 2020 59th IEEE Conference on Decision and Control (CDC)},
  pages 5929--5934. IEEE, 2020.

\bibitem{huster2018limitations}
Todd Huster, Cho-Yu~Jason Chiang, and Ritu Chadha.
\newblock Limitations of the lipschitz constant as a defense against
  adversarial examples.
\newblock In {\em Joint European Conference on Machine Learning and Knowledge
  Discovery in Databases}, pages 16--29. Springer, 2018.

\bibitem{katz2017reluplex}
Guy Katz, Clark Barrett, David~L. Dill, Kyle Julian, and Mykel~J Kochenderfer.
\newblock Reluplex: An efficient smt solver for verifying deep neural networks.
\newblock In {\em International Conference on Computer Aided Verification},
  pages 97--117. Springer, 2017.

\bibitem{krizhevsky2009learning}
Alex Krizhevsky, Geoffrey Hinton, et~al.
\newblock Learning multiple layers of features from tiny images.
\newblock 2009.

\bibitem{lasserre2001global}
Jean~B Lasserre.
\newblock Global optimization with polynomials and the problem of moments.
\newblock {\em SIAM Journal on optimization}, 11(3):796--817, 2001.

\bibitem{latorre2020lipschitz}
Fabian Latorre, Paul Rolland, and Volkan Cevher.
\newblock Lipschitz constant estimation of neural networks via sparse
  polynomial optimization.
\newblock In {\em International Conference on Learning Representations}, 2020.

\bibitem{mosek2015mosek}
ApS Mosek.
\newblock The mosek optimization toolbox for matlab manual, 2015.

\bibitem{netzer2011reading}
Yuval Netzer, Tao Wang, Adam Coates, Alessandro Bissacco, Bo~Wu, and Andrew~Y
  Ng.
\newblock Reading digits in natural images with unsupervised feature learning.
\newblock 2011.

\bibitem{pabbaraju2021estimating}
Chirag Pabbaraju, Ezra Winston, and J.~Zico Kolter.
\newblock Estimating lipschitz constants of monotone deep equilibrium models.
\newblock In {\em International Conference on Learning Representations}, 2021.

\bibitem{putinar1993positive}
Mihai Putinar.
\newblock Positive polynomials on compact semi-algebraic sets.
\newblock {\em Indiana University Mathematics Journal}, 42(3):969--984, 1993.

\bibitem{raghunathan2018certified}
Aditi Raghunathan, Jacob Steinhardt, and Percy Liang.
\newblock Certified defenses against adversarial examples.
\newblock In {\em International Conference on Learning Representations}, 2018.

\bibitem{Raghuathan18}
Aditi Raghunathan, Jacob Steinhardt, and Percy Liang.
\newblock Semidefinite relaxations for certifying robustness to adversarial
  examples.
\newblock In {\em Advances in Neural Information Processing Systems}, pages
  10877--10887, 2018.

\bibitem{rauber2020foolbox}
Jonas Rauber, Roland Zimmermann, Matthias Bethge, and Wieland Brendel.
\newblock Foolbox native: Fast adversarial attacks to benchmark the robustness
  of machine learning models in pytorch, tensorflow, and jax.
\newblock {\em Journal of Open Source Software}, 5(53):2607, 2020.

\bibitem{sharma2009matlab}
Gaurav Sharma and Jos Martin.
\newblock Matlab{\textregistered}: a language for parallel computing.
\newblock {\em International Journal of Parallel Programming}, 37(1):3--36,
  2009.

\bibitem{shor1987quadratic}
Naum~Z. Shor.
\newblock Quadratic optimization problems.
\newblock {\em Soviet Journal of Computer and Systems Sciences}, 25:1--11,
  1987.

\bibitem{singh2018fast}
Gagandeep Singh, Timon Gehr, Matthew Mirman, Markus P\"{u}schel, and Martin
  Vechev.
\newblock Fast and effective robustness certification.
\newblock In S.~Bengio, H.~Wallach, H.~Larochelle, K.~Grauman, N.~Cesa-Bianchi,
  and R.~Garnett, editors, {\em Advances in Neural Information Processing
  Systems}, volume~31. Curran Associates, Inc., 2018.

\bibitem{szegedy2013intriguing}
Christian Szegedy, Wojciech Zaremba, Ilya Sutskever, Joan Bruna, Dumitru Erhan,
  Ian Goodfellow, and Rob Fergus.
\newblock Intriguing properties of neural networks.
\newblock In {\em International Conference on Learning Representations}, 2014.

\bibitem{virmaux2018lipschitz}
Aladin Virmaux and Kevin Scaman.
\newblock Lipschitz regularity of deep neural networks: analysis and efficient
  estimation.
\newblock In S.~Bengio, H.~Wallach, H.~Larochelle, K.~Grauman, N.~Cesa-Bianchi,
  and R.~Garnett, editors, {\em Advances in Neural Information Processing
  Systems}, volume~31. Curran Associates, Inc., 2018.

\bibitem{weng2018towards}
Lily Weng, Huan Zhang, Hongge Chen, Zhao Song, Cho-Jui Hsieh, Luca Daniel,
  Duane Boning, and Inderjit Dhillon.
\newblock Towards fast computation of certified robustness for relu networks.
\newblock In {\em International Conference on Machine Learning}, pages
  5276--5285. PMLR, 2018.

\bibitem{winston2020monotone}
Ezra Winston and J.~Zico Kolter.
\newblock Monotone operator equilibrium networks.
\newblock In H.~Larochelle, M.~Ranzato, R.~Hadsell, M.~F. Balcan, and H.~Lin,
  editors, {\em Advances in Neural Information Processing Systems}, volume~33,
  pages 10718--10728. Curran Associates, Inc., 2020.

\bibitem{lecun2010mnist}
LeCun Yann, Cortes Corinna, and Burges Christopher J.~C.
\newblock Mnist handwritten digit database.
\newblock 2010.
\newblock [ATT Labs Online].

\bibitem{zhang2018efficient}
Huan Zhang, Tsui-Wei Weng, Pin-Yu Chen, Cho-Jui Hsieh, and Luca Daniel.
\newblock Efficient neural network robustness certification with general
  activation functions.
\newblock In S.~Bengio, H.~Wallach, H.~Larochelle, K.~Grauman, N.~Cesa-Bianchi,
  and R.~Garnett, editors, {\em Advances in Neural Information Processing
  Systems}, volume~31. Curran Associates, Inc., 2018.

\bibitem{zou2019lipschitz}
Dongmian Zou, Radu Balan, and Maneesh Singh.
\newblock On lipschitz bounds of general convolutional neural networks.
\newblock {\em IEEE Transactions on Information Theory}, 66(3):1738--1759,
  2019.

\end{thebibliography}

	\appendix
	
	\section{Appendix}
	\subsection{Proof of Lemma \ref{clip}} \label{proof_lem}
	\begin{df}
		\textbf{(Clarke's generalized Jacobian) \cite{clarke1990optimization}} Let $f: \mathbb{R}^n \rightarrow \mathbb{R}^m$ be a locally Lipschitz vector-valued function, denote by $\Omega_f$ any zero measure set such that $f$ is differentiable outside $\Omega_f$. For $\mathbf{x} \notin \Omega_f$, denote by $\mathcal{J}_f (\mathbf{x})$ the \emph{Jacobian matrix} of $f$ evaluated at $\mathbf{x}$. For any $\mathbf{x} \in \mathbb{R}^n$, the \emph{generalized Jacobian}, or \emph{Clarke Jacobian}, of $f$ evaluated at $\mathbf{x}$, denoted by $\mathcal{J}^C_f (\mathbf{x})$, is defined as the convex hull of all $m\times n$ matrices obtained as the limit of a sequence of the form $\mathcal{J}_f (\mathbf{x}_i)$ with $\mathbf{x}_i \rightarrow \mathbf{x}$ and $\mathbf{x}_i \notin \Omega_f$. Symbolically, one has 
		$$\mathcal{J}^C_f (\mathbf{x}) : = \conv \{\lim \mathcal{J}_f (\mathbf{x}_i): \mathbf{x}_i \rightarrow \mathbf{x}, \, \mathbf{x}_i \notin \Omega_f\}.$$
	\end{df}
	In order to estimate the Lipschitz constant $L_{F, \mathcal{S}}^{q}$, we need the following lemma:
	
	\begin{lemma}\label{lipschitz}
		Let $F: \mathbb{R}^{p_0} \rightarrow \mathbb{R}^K, \mathbf{x} \mapsto \mathbf{C} \mathbf{z} (\mathbf{x})$ be the fully-connected monDEQ. Its Lipschitz constant is upper bounded by the supremum of the operator norm of its generalized Jacobian, i.e., define
		\begin{align}
			& \bar{L}_{F, \mathcal{S}}^{q} : = \sup_{\mathbf{t}, \mathbf{x} \in \mathbb{R}^{p_0}, \mathbf{v}, \mathbf{w} \in \mathbb{R}^K, \mathbf{J} \in \mathcal{J}^C_{\mathbf{z}} (\mathbf{x})} \{\mathbf{t}^T \mathbf{J}^T \mathbf{C}^T \mathbf{v}: \|\mathbf{t}\|_q \le 1, \,\mathbf{w}^T \mathbf{v} \le 1, \,\|\mathbf{w}\|_q \le 1, \,\mathbf{x} \in \mathcal{S}\} \label{eq} \,,
		\end{align}
		then $L_{F, \mathcal{S}}^{q} \le \bar{L}_{F, \mathcal{S}}^{q}$.
	\end{lemma}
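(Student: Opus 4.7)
The plan is to combine Clarke's generalized mean value inequality for locally Lipschitz vector fields with the standard duality representation of the $L_q$ operator norm.

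First, I would verify that $\mathbf{z}:\mathbb{R}^{p_0}\to\mathbb{R}^p$ is globally Lipschitz. Subtracting the fixed-point equations satisfied by $\mathbf{z}(\mathbf{x})$ and $\mathbf{z}(\mathbf{y})$, and combining the $1$-Lipschitzness of $\relu$ with the strong monotonicity $\mathbf{I}_p-\mathbf{W}\succeq m\mathbf{I}_p$ (exactly as in \cite{pabbaraju2021estimating}), yields an explicit Lipschitz constant for $\mathbf{z}$ in terms of $\vertiii{\mathbf{U}}$ and $m$. Rademacher's theorem then guarantees that $\mathbf{z}$ is differentiable almost everywhere, so the Clarke Jacobian $\mathcal{J}^C_\mathbf{z}(\mathbf{x})$ is a non-empty, compact, convex subset of $\mathbb{R}^{p\times p_0}$ at every point, and the composition $F=\mathbf{C}\mathbf{z}+\mathbf{c}$ fits fully within the scope of Clarke calculus. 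Since $\mathcal{S}$ is a ball (hence convex) in our setting, this regularity is enough to work with line segments inside $\mathcal{S}$.

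Second, for any $\mathbf{x},\mathbf{y}\in\mathcal{S}$, I would invoke the integral mean value inequality for locally Lipschitz maps to obtain a measurable selection $t\mapsto\mathbf{J}_t\in\mathcal{J}^C_\mathbf{z}(\mathbf{y}+t(\mathbf{x}-\mathbf{y}))$ with
\begin{equation*}
F(\mathbf{x})-F(\mathbf{y})=\int_0^1 \mathbf{C}\mathbf{J}_t(\mathbf{x}-\mathbf{y})\,dt.
\end{equation*}
Taking $L_q$ norms and using Jensen's inequality yields
\begin{equation*}
\|F(\mathbf{x})-F(\mathbf{y})\|_q\le\|\mathbf{x}-\mathbf{y}\|_q\cdot\sup\{\vertiii{\mathbf{C}\mathbf{J}}_q:\xi\in\mathcal{S},\,\mathbf{J}\in\mathcal{J}^C_\mathbf{z}(\xi)\},
\end{equation*}
so $L_{F,\mathcal{S}}^q$ is dominated by the supremum of $\vertiii{\mathbf{C}\mathbf{J}}_q$ over admissible $(\xi,\mathbf{J})$. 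I would then expand the operator norm by the bilinear duality identity $\vertiii{\mathbf{A}}_q=\sup\{\mathbf{v}^T\mathbf{A}\mathbf{t}:\|\mathbf{t}\|_q\le 1,\,\|\mathbf{v}\|_{q^*}\le 1\}$, and encode the dual-ball constraint $\|\mathbf{v}\|_{q^*}\le 1$ through the witness $\mathbf{w}$ via $\|\mathbf{w}\|_q\le 1$ and $\mathbf{w}^T\mathbf{v}\le 1$, which is the semialgebraic form used in \eqref{eq}. Rewriting $\mathbf{v}^T\mathbf{C}\mathbf{J}\mathbf{t}=\mathbf{t}^T\mathbf{J}^T\mathbf{C}^T\mathbf{v}$ and taking the joint supremum over all feasible tuples $(\mathbf{t},\mathbf{x},\mathbf{v},\mathbf{w},\mathbf{J})$ recovers exactly $\bar{L}_{F,\mathcal{S}}^q$, and chaining the inequalities produces $L_{F,\mathcal{S}}^q\le\bar{L}_{F,\mathcal{S}}^q$.

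The main obstacle is the integral representation in the second step: because $\mathbf{z}$ is only Lipschitz, one cannot apply a classical pointwise mean value formula. The clean remedy is either a Rademacher--Fubini argument (almost every line parallel to $\mathbf{x}-\mathbf{y}$ avoids the zero-measure nondifferentiability set of $\mathbf{z}$, so one integrates the classical Jacobian along such approximating lines and passes to the limit inside the convex hull that defines $\mathcal{J}^C_\mathbf{z}$), or a direct application of Clarke's mean value theorem for locally Lipschitz maps combined with a measurable selection to convert the set-valued integrand into a single measurable path of Jacobians. Once this representation is secured, the first step is essentially quoted from \cite{pabbaraju2021estimating}, and the third step is routine convex analysis.
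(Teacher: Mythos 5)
Your proposal is correct, and it reaches the bound $L_{F,\mathcal{S}}^q\le\sup\{\vertiii{\mathbf{C}\mathbf{J}}_q\}$ by a genuinely different analytic route than the paper. The paper does not use Clarke's vector mean value theorem or a Rademacher--Fubini argument; instead it exploits the \emph{semialgebraicity} of $\mathbf{z}$ to argue that the Clarke Jacobian of $\mathbf{z}$ is a \emph{conservative field} in the sense of Bolte--Pauwels, hence that $\mathbf{C}\mathcal{J}^C_{\mathbf{z}}$ is conservative for $F$ and satisfies an exact integration formula along segments: $\tfrac{\mathrm{d}}{\mathrm{d}t}F(\gamma(t))=\mathbf{C}\mathbf{J}\gamma'(t)$ for almost every $t$ and for \emph{every} $\mathbf{J}\in\mathcal{J}^C_{\mathbf{z}}(\gamma(t))$. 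Your argument needs only the weaker inclusion $\tfrac{\mathrm{d}}{\mathrm{d}t}F(\gamma(t))\in\mathbf{C}\mathcal{J}^C_{\mathbf{z}}(\gamma(t))(\mathbf{x}-\mathbf{y})$ a.e., which holds for an arbitrary locally Lipschitz map by the Clarke chain rule along an affine path (no measurable selection is actually required to pass to the norm bound, since every element of the convex set has operator norm at most $M$); this makes your proof more elementary and independent of semialgebraicity, whereas the paper's conservative-field machinery is stronger than needed here but is reused in the remainder of the proof of Lemma \ref{clip} (the closed-graph argument for $\mathcal{J}^C_{\relu}$ and the fixed-point equation for $\mathbf{J}$), so the paper pays that cost once. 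Both proofs share the same final duality step, and both share the same harmless imprecision there: replacing the constraint $\|\mathbf{v}\|_q^*\le 1$ by the existential witness $(\mathbf{w}^T\mathbf{v}\le 1,\ \|\mathbf{w}\|_q\le 1)$ enlarges the feasible set, so the last identity in the chain is really only a ``$\le$''; since the lemma claims an upper bound, the conclusion $L_{F,\mathcal{S}}^q\le\bar{L}_{F,\mathcal{S}}^q$ is unaffected in either version.
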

	\begin{proof}
		Since $\mathbf{z} (\mathbf{x}) = \relu (\mathbf{Wz} (\mathbf{x}) + \mathbf{Ux} + \mathbf{u})$ by definition of monDEQ, $\mathbf{z} (\mathbf{x})$ is Lipschitz according to \cite[Theorem 1]{pabbaraju2021estimating}. Furthermore, $\mathbf{z} (\mathbf{x})$ is semialgebraic by the semialgebraicity of $\relu$ in \eqref{eq:mainEquivalence}. Therefore, the Clarke Jacobian of $\mathbf{z}$ is conservative. Indeed by \cite[Proposition 2.6.2]{clarke1990optimization}, the Clarke Jacobian is included in the product of subgradients of its coordinates which is a conservative field by \cite[Lemma 3, Theorems 2 and 3]{bolte2020conservative}.
		Since $F = \mathbf{C} \circ \mathbf{z}$, the mapping $\mathbf{C} \mathcal{J}^C_{\mathbf{z}}: \mathbf{x} \rightrightarrows \mathbf{CJ}$, where $\mathbf{J} \in \mathcal{J}^C_{\mathbf{z}}$, is conservative for $F$ by \cite[Lemma 5]{bolte2020conservative}. So it satisfies an integration formula along segments. Let $\mathbf{x}_1, \mathbf{x}_2 \in \mathcal{E}$, and let $\gamma: [0,1] \rightarrow \mathbb{R}^{p_0}$ be a parametrization of the segment defined by $\gamma(t) = \mathbf{x}_1 + t (\mathbf{x}_2 - \mathbf{x}_1)$ (which is absolutely continuous). For almost all $t\in [0,1]$,  we have $\frac{\text{d}}{\text{d} t} F (\gamma(t)) = \mathbf{CJ} \gamma'(t) = \mathbf{CJ} (\mathbf{x}_2 - \mathbf{x}_1)$ for all $\mathbf{J}\in \mathcal{J}^C_{\mathbf{z}} (\gamma(t))$. 
		
		Let $M = \sup_{\mathbf{x} \in \mathcal{S}, \mathbf{J} \in \mathcal{J}_{\mathbf{z}}^C (\mathbf{x})} \vertiii{\mathbf{CJ}}_q$ be the supremum of the operator norm $\vertiii{\mathbf{CJ}}_q$ for all $\mathbf{J} \in \mathcal{J}^C_{\mathbf{z}}(\mathbf{x})$ and all $\mathbf{x} \in \mathcal{S}$. We prove that $M < + \infty$. Indeed, $\mathbf{z} (\mathbf{x})$ is Lipschitz, hence there exists $N > 0$ such that $\vertiii{\mathbf{J}}_q < N$ for all $\mathbf{J} \in \mathcal{J}^C_{\mathbf{z}}(\mathbf{x})$ and all $\mathbf{x} \in \mathcal{S}$. The value $M$ is thus upper bounded by $\vertiii{\mathbf{C}}_q N$.
		
		Therefore, for almost all $t\in [0,1]$, $\|\frac{\text{d}}{\text{d} t} F (\gamma(t))\|_q \leq M \|\mathbf{x}_2 - \mathbf{x}_1\|_q$, and by integration,
		\begin{align}\label{eq2}
			\|F (\mathbf{x}_2) - F (\mathbf{x}_1)\|_q & = \bigg\|\int_{0}^{1} \frac{\text{d}}{\text{d} t} F (\gamma(t)) \text{d} t\bigg\|_q \le \int_{0}^{1}\bigg\| \frac{\text{d}}{\text{d} t} F (\gamma(t)) \bigg\|_q \text{d} t \le M \|\mathbf{x}_2 - \mathbf{x}_1\|_q \,,
		\end{align}
		which proves that $L_{F, \mathcal{S}}^{q} \leq M$. Let us show that $M = \bar{L}_{F, \mathcal{S}}^{q}$. Fix $\mathbf{x} \in \mathbb{R}^{p_0}$ and $\mathbf{J} \in \mathcal{J}^C_{\mathbf{z}}(\mathbf{x})$. By the definition of operator norm,
		\begin{align} \label{eq3}
			\vertiii{\mathbf{CJ}}_q & = \vertiii{(\mathbf{CJ})^T}^*_q = \max_{\mathbf{v} \in \mathbb{R}^K} \{\|\mathbf{J}^T \mathbf{C}^T \mathbf{v}\|^*_q: \|\mathbf{v}\|^*_q \le 1\} \nonumber \\
			& = \max_{\mathbf{t} \in \mathbb{R}^{p_0}, \mathbf{v} \in \mathbb{R}^K} \{\mathbf{t}^T \mathbf{J}^T \mathbf{C}^T \mathbf{v}: \|\mathbf{t}\|_q \le 1, \,\|\mathbf{v}\|^*_q \le 1\} \nonumber \\
			& = \max_{\mathbf{t} \in \mathbb{R}^{p_0}, \mathbf{v}, \mathbf{w} \in \mathbb{R}^K} \{\mathbf{t}^T \mathbf{J}^T \mathbf{C}^T \mathbf{v}: \|\mathbf{t}\|_q \le 1, \,\mathbf{w}^T \mathbf{v} \le 1, \, \|\mathbf{w}\|_q \le 1\} \,,
		\end{align}
		where $\|\cdot\|^*_q$ denotes the dual norm of $\|\cdot\|_q$ defined by $\|\mathbf{v}\|^*_q : = \sup_{\mathbf{w} \in \mathbb{R}^K} \{\mathbf{w}^T \mathbf{v}: \|\mathbf{w}\|_q \le 1\}$ for all $\mathbf{v} \in \mathbb{R}^{K}$, and the first equality is due to the fact that the operator norm of matrix $\mathbf{CJ}$ induced by norm $\|\cdot\|_q$ is equal to the operator norm of its transpose $(\mathbf{CJ})^T$ induced by the dual norm $\|\cdot\|^*_q$. Indeed, by definition of operator norm and dual norm, we have
		\begin{align*}
			\vertiii{\mathbf{CJ}}_q & = \sup_{\mathbf{x} \in \mathbb{R}^{p_0}} \{\|\mathbf{CJx}\|_q: \|\mathbf{x}\|_q \le 1\} = \sup_{\mathbf{x} \in \mathbb{R}^{p_0}, \mathbf{y} \in \mathbb{R}^{p}} \{\mathbf{y}^T \mathbf{CJx}: \|\mathbf{x}\|_q \le 1, \|\mathbf{y}\|^*_q \le 1\} \\
			& = \sup_{\mathbf{x} \in \mathbb{R}^{p_0}, \mathbf{y} \in \mathbb{R}^{p}} \{\mathbf{x}^T (\mathbf{CJ})^T \mathbf{y}: \|\mathbf{x}\|_q \le 1, \|\mathbf{y}\|^*_q \le 1\} = \sup_{\mathbf{y} \in \mathbb{R}^{p}} \{\|(\mathbf{CJ})^T \mathbf{y}\|_q^*: \|\mathbf{y}\|^*_q \le 1\} \\
			& = \vertiii{(\mathbf{CJ})^T}^*_q \,.
		\end{align*}
		The quantity $\bar{L}_{F, \mathcal{S}}^{q}$ is just the maximization of Equation \eqref{eq3} for all $\mathbf{x} \in \mathbb{R}^{p_0}$ and all $\mathbf{J} \in \mathcal{J}^C_{\mathbf{z}}(\mathbf{x})$ and therefore equals $M$.
	\end{proof}
	
	The function $\mathbf{z}$ is semialgebraic, and therefore, there exists a closed zero measure set $\Omega_\mathbf{z}$ such that $\mathbf{z}$ is continuously differentiable on the complement of $\Omega_\mathbf{z}$. For any $\mathbf{x} \not \in \Omega_\mathbf{z}$, since $\mathbf{z}$ is $C^1$ at $\mathbf{x}$, we have $\mathcal{J}^C_{\mathbf{z}} (\mathbf{x}) = \{\mathcal{J}_{\mathbf{z}} (\mathbf{x})\}$ by definition of the Clarke Jacobian. Fix $\mathbf{x} \not\in \Omega_\mathbf{z}$ arbitrary. According to the Corollary of Theorem 2.6.6, on page 75 of \cite{clarke1990optimization}, we have 
	\begin{align}
		\mathcal{J}^C_{\mathbf{z}} (\mathbf{x}) & \subseteq \conv \{\mathcal{J}^C_{\relu} (\mathbf{Wz}(\mathbf{x}) + \mathbf{Ux} + \mathbf{u}) \cdot \mathcal{J}^C_{\mathbf{Wz}(\mathbf{x}) + \mathbf{Ux} + \mathbf{u}} (\mathbf{x})\}\nonumber \\
		& = \conv \{\mathcal{J}^C_{\relu} (\mathbf{Wz}(\mathbf{x}) + \mathbf{Ux} + \mathbf{u}) \cdot(\mathbf{W} \cdot \mathcal{J}_{\mathbf{z}} (\mathbf{x}) + \mathbf{U}) \nonumber\\
		&= \mathcal{J}^C_{\relu} (\mathbf{Wz}(\mathbf{x}) + \mathbf{Ux} + \mathbf{u}) \cdot (\mathbf{W} \cdot \mathcal{J}_{\mathbf{z}} (\mathbf{x}) + \mathbf{U}),
		\label{eq1}
	\end{align}
	where the first inclusion is from the cited Corollary, the first equality is because $\mathbf{z}$ is $C^1$ at $\mathbf{x}$ so that the chain rule applies, and the last one is because the Clarke Jacobian is convex.
	
	Fix any any $\bar{\mathbf{x}} \in \mathbb{R}^{p_0}$, then by definition $\mathcal{J}^C_{\mathbf{z}} (\bar{\mathbf{x}}) = \conv \{\lim \mathcal{J}_{\mathbf{z}} (\mathbf{x}_i): \mathbf{x}_i \rightarrow \bar{\mathbf{x}}, i \rightarrow +\infty, \mathbf{x}_i \notin \Omega_{\mathbf{z}}\}$. Let $\{\mathbf{x}_i\}_{i \in \mathbb{N}}$ be a sequence not in $\Omega_\mathbf{z}$ converging to $\bar{\mathbf{x}}$, for each $\mathbf{x}_i \notin \Omega_{\mathbf{z}}$, we have by \eqref{eq1} that $\mathcal{J}_{\mathbf{z}} (\mathbf{x}_i) \in \mathcal{J}^C_{\relu} (\mathbf{Wz} (\mathbf{x}_i) + \mathbf{Ux}_i + \mathbf{u}) \cdot (\mathbf{W} \cdot \mathcal{J}_{\mathbf{z}} (\mathbf{x}_i) + \mathbf{U})$, i.e., there exists $\mathbf{Y}_i \in \mathcal{J}^C_{\relu} (\mathbf{Wz} (\mathbf{x}_i) + \mathbf{Ux}_i + \mathbf{u})$ such that $\mathcal{J}_{\mathbf{z}} (\mathbf{x}_i) = \mathbf{Y}_i (\mathbf{W} \cdot \mathcal{J}_{\mathbf{z}} (\mathbf{x}_i) + \mathbf{U})$. By \cite[proposition 2.6.2 (b)]{clarke1990optimization}, $\mathcal{J}^C_{\relu}$ has closed graph. Therefore, by continuity of $\mathbf{z}$, up to a subsequence, $\mathbf{Y}_i \rightarrow \mathbf{Y} \in \mathcal{J}^C_{\relu} (\mathbf{Wz} (\bar{\mathbf{x}}) + \mathbf{U} \bar{\mathbf{x}} + \mathbf{u})$  for $i \rightarrow + \infty$, which means
	\begin{align}\label{eq4}
		\mathcal{J}^C_{\mathbf{z}} (\bar{\mathbf{x}}) & \subseteq \{\mathbf{J}:\:\mathbf{Y} \in \mathcal{J}^C_{\relu} (\mathbf{Wz} (\bar{\mathbf{x}}) + \mathbf{U} \bar{\mathbf{x}} + \mathbf{u}), \,\mathbf{J} = \mathbf{Y} (\mathbf{WJ} + \mathbf{U})\} \,,
	\end{align}
	for all $\bar{\mathbf{x}} \in \mathbb{R}^{p_0}$. Let $\mathbf{Y} \in \mathcal{J}^C_{\relu} (\mathbf{Wz} + \mathbf{U} \mathbf{x} + \mathbf{u})$, since we have coordinate-wise applications of $\relu$, we have that $\mathbf{Y} = \diag (\mathbf{s})$ with $\mathbf{s} \in \partial\relu (\mathbf{Wz} + \mathbf{Ux} + \mathbf{u})$. By equation \eqref{eq4}, the right-hand side of equation \eqref{eq} is upper bounded by
	\begin{align*}\label{lipmona}
		&\max_{\mathbf{t}, \mathbf{x} \in \mathbb{R}^{p_0}, \mathbf{s}, \mathbf{z} \in \mathbb{R}^p, \mathbf{v}, \mathbf{w} \in \mathbb{R}^K, \mathbf{J} \in \mathbb{R}^{p\times p_0}} \{\mathbf{t}^T \mathbf{J}^T \mathbf{C}^T \mathbf{v}: \|\mathbf{t}\|_q \le 1, \,\mathbf{w}^T \mathbf{v} \le 1, \,\|\mathbf{w}\|_q \le 1, \,\mathbf{x} \in \mathcal{S}, \\
		& \qquad\qquad\qquad\qquad\qquad \mathbf{s} \in \partial\relu (\mathbf{Wz} + \mathbf{Ux} + \mathbf{u}), \,\mathbf{z} = \relu (\mathbf{Wz} + \mathbf{Ux} + \mathbf{u}), \\
		& \qquad\qquad\qquad\qquad\qquad \mathbf{J} = \diag(\mathbf{s}) \cdot (\mathbf{W} \cdot \mathbf{J} + \mathbf{U})\} \,.\tag{LipMON-a}
	\end{align*}
	Notice that in problem \eqref{lipmona}, we have a matrix variable $\mathbf{J}$ of size $p \times p_0$, i.e., containing $p \times p_0$ many variables, which is too large for any SDP solvers. To reduce the size, we use the \emph{vector-matrix product} trick introduced in \cite{winston2020monotone} to reduce the size of the unknown variables. From equation $\mathbf{J} = \diag(\mathbf{s}) \cdot (\mathbf{W} \cdot \mathbf{J} + \mathbf{U})$, we have $\mathbf{J} = (\mathbf{I}_{p} - \diag (\mathbf{s}) \cdot \mathbf{W})^{-1} \cdot \diag (\mathbf{s}) \cdot \mathbf{U}$. This inversion makes sense because of the strong monotonicity of $\mathbf{I}_{p} - \mathbf{W}$ and the fact that all entries of $\mathbf{s}$ lie in $[0,1]$ \cite[Proposition 1]{winston2020monotone}. Hence
	\begin{align}\label{eq5}
		\mathbf{v}^T \mathbf{C} \mathbf{J} = \mathbf{v}^T \mathbf{C} \cdot (\mathbf{I}_{p} - \diag (\mathbf{s}) \cdot \mathbf{W})^{-1} \cdot \diag (\mathbf{s}) \cdot \mathbf{U} = \mathbf{r}^T \cdot \diag (\mathbf{s}) \cdot \mathbf{U} \,,
	\end{align}
	where $\mathbf{r}^T = \mathbf{v}^T \mathbf{C} \cdot (\mathbf{I}_{p} - \diag (\mathbf{s}) \cdot \mathbf{W})^{-1}$, which means $\mathbf{r} - \mathbf{W}^T \cdot \diag (\mathbf{s}) \cdot \mathbf{r} = \mathbf{C}^T \mathbf{v}$. Set $\mathbf{y} = \diag (\mathbf{s}) \cdot \mathbf{r}$ and transpose both sides of equation \eqref{eq5}, we have $\mathbf{J}^T \mathbf{C}^T \mathbf{v} = \mathbf{U}^T \mathbf{y}$ with $\mathbf{r} - \mathbf{W}^T \cdot \mathbf{y} = \mathbf{C}^T \mathbf{v}$. We can then rewrite the objective function of \eqref{lipmona} as $\mathbf{t}^T \mathbf{U}^T \mathbf{y}$, leading to the following equivalent problem
	\begin{align}\label{lipmonb}
		&\max_{\mathbf{t}, \mathbf{x} \in \mathbb{R}^{p_0}, \mathbf{s}, \mathbf{z}, \mathbf{y}, \mathbf{r} \in \mathbb{R}^p, \mathbf{v}, \mathbf{w} \in \mathbb{R}^K} \{\mathbf{t}^T \mathbf{U}^T \mathbf{y}: \|\mathbf{t}\|_q \le 1, \,\mathbf{w}^T \mathbf{v} \le 1, \,\|\mathbf{w}\|_q \le 1, \,\mathbf{x} \in \mathcal{S}, \nonumber \\
		& \qquad\qquad\qquad\qquad \mathbf{s} \in \partial\relu (\mathbf{Wz} + \mathbf{Ux} + \mathbf{u}), \,\mathbf{z} = \relu (\mathbf{Wz} + \mathbf{Ux} + \mathbf{u}), \nonumber \\
		& \qquad\qquad\qquad\qquad \mathbf{r} - \mathbf{W}^T \mathbf{y} = \mathbf{C}^T \mathbf{v}, \,\mathbf{y} = \diag (\mathbf{s}) \cdot \mathbf{r}\} \,. \tag{LipMON-b}
	\end{align}
	We have shown that \eqref{lipmonb} is the right hand side of Equation \eqref{lipmonf} in Lemma \ref{clip} and is an upper bound of the right hand side of Equation \eqref{eq} in Lemma \ref{lipschitz}, i.e., $\bar{L}_{F, \mathcal{S}}^{q} \le \tilde{L}_{F, \mathcal{S}}^{q}$.
	
	\subsection{Redundant Constraints of the Lipschitz Model} \label{detail_lip}
	In order to avoid possible numerical issues of problem \eqref{lipmonf}, and to improve the bounds, we add some redundant constraints to it. For variables $\mathbf{r}$ and $\mathbf{y}$. Note that $\mathbf{r} = (\mathbf{I}_{p} - \mathbf{W}^T \cdot \diag (\mathbf{s}))^{-1} \cdot \mathbf{C}^T \mathbf{v}$, hence $\|\mathbf{r}\|_2 \le \vertiii{(\mathbf{I}_{p} - \mathbf{W}^T \cdot \diag (\mathbf{s}))^{-1}}_2 \cdot \vertiii{\mathbf{C}^T}_2 \cdot \|\mathbf{v}\|_2$. The operator norm of a matrix induced by $L_2$ norm is its largest singular value. Hence the operator norm of $(\mathbf{I}_{p} - \mathbf{W}^T \cdot \diag (\mathbf{s}))^{-1}$ is the smallest singular value of matrix $\mathbf{I}_{p} - \mathbf{W}^T \cdot \diag (\mathbf{s})$, which is smaller or equal than 1 from the recent work \cite{winston2020monotone}. In summary, we have $\|\mathbf{r}\|_2 \le \vertiii{\mathbf{C}}_2 \cdot \|\mathbf{v}\|_2$ and $\|\mathbf{y}\|_2 \le \vertiii{\mathbf{C}}_2 \cdot \|\mathbf{v}\|_2$. For Lipschitz Model w.r.t. $L_2$ norm, we have $\|\mathbf{v}\|_2 \le 1$; for Lipschitz Model w.r.t. $L_{\infty}$ norm, we have $\|\mathbf{v}\|^*_{\infty} = \|\mathbf{v}\|_1 \le 1$, thus $\|\mathbf{v}\|_2 \le \|\mathbf{v}\|_1 \le 1$. Therefore, for both $L_2$ and $L_{\infty}$ norm, we can bound the $L_2$ norm of variables $\mathbf{r}$ and $\mathbf{y}$ by $\vertiii{\mathbf{C}}_2$. Moreover, we multiply the equality constraint $\mathbf{r} - \mathbf{W}^T \cdot \mathbf{y} = \mathbf{C}^T \mathbf{v}$ coordinate-wisely with variables $\mathbf{s}, \mathbf{z}, \mathbf{y}, \mathbf{r}$ to produce redundant constraints and improve the results. This strengthening technique is already included in the software Gloptipoly3 \cite{henrion2009gloptipoly}. With all the discussion above, we now write the strengthened version of problem \eqref{lipmonb} as follows:
	\begin{align}\label{lipmonc}
		& \max_{\mathbf{t}, \mathbf{x} \in \mathbb{R}^{p_0}, \mathbf{s}, \mathbf{z}, \mathbf{y}, \mathbf{r} \in \mathbb{R}^p, \mathbf{v}, \mathbf{w} \in \mathbb{R}^K} \{\mathbf{t}^T \mathbf{U}^T \mathbf{y}: \|\mathbf{t}\|_q \le 1, \,\mathbf{w}^T \mathbf{v} \le 1, \,\|\mathbf{w}\|_q \le 1, \,\mathbf{x} \in \mathcal{S}, \nonumber \\
		& \quad\qquad \mathbf{s} \in \partial\relu (\mathbf{Wz} + \mathbf{Ux} + \mathbf{u}), \,\mathbf{z} = \relu (\mathbf{Wz} + \mathbf{Ux} + \mathbf{u}), \nonumber \\
		& \quad\qquad \mathbf{r} - \mathbf{W}^T \mathbf{y} = \mathbf{C}^T \mathbf{v}, \,\mathbf{y} = \diag (\mathbf{s}) \cdot \mathbf{r}, \,\|\mathbf{y}\|_2 \le \vertiii{\mathbf{C}}_2 \cdot \|\mathbf{v}\|_2, \,\|\mathbf{r}\|_2 \le \vertiii{\mathbf{C}}_2 \cdot \|\mathbf{v}\|_2, \nonumber\\
		& \quad\qquad \mathbf{s} (\mathbf{r} - \mathbf{W}^T \mathbf{y}) = \mathbf{s} (\mathbf{C}^T \mathbf{v}), \,\mathbf{z} (\mathbf{r} - \mathbf{W}^T \mathbf{y}) = \mathbf{z} (\mathbf{C}^T \mathbf{v}), \nonumber \\
		& \quad\qquad \mathbf{y} (\mathbf{r} - \mathbf{W}^T \mathbf{y}) = \mathbf{y} (\mathbf{C}^T \mathbf{v}), \,\mathbf{r} (\mathbf{r} - \mathbf{W}^T \mathbf{y}) = \mathbf{r} (\mathbf{C}^T \mathbf{v})\} \,. \tag{LipMON-c}
	\end{align}
	
	\subsection{Proof of Lemma \ref{ellipmon}}\label{ellipsoid}
	The SOS constraint in problem \eqref{ellipsos} can be written as
	\begin{align*}
		\sigma_0 (\mathbf{x}, \mathbf{z}) & = - \big(\|\mathbf{Q} (\mathbf{Cz} + \mathbf{c}) + \mathbf{b}\|_2^2 -1 \qquad (=: f_1 (\mathbf{x}, \mathbf{z})) \\
		& \qquad\quad + \sigma_1 (\mathbf{x}, \mathbf{z})^T g_q(\mathbf{x} - \mathbf{x}_0) \qquad (=: f_2 (\mathbf{x}, \mathbf{z})) \\
		& \qquad\quad + \tau (\mathbf{x}, \mathbf{z})^T (\mathbf{z} (\mathbf{z} - \mathbf{Wz} - \mathbf{Ux} - \mathbf{u})) \qquad (=: f_3 (\mathbf{x}, \mathbf{z})) \\
		& \qquad\quad + \sigma_{2} (\mathbf{x}, \mathbf{z})^T (\mathbf{z} - \mathbf{Wz} - \mathbf{Ux} - \mathbf{u}) \qquad (=: f_4 (\mathbf{x}, \mathbf{z})) \\
		& \qquad\quad + \sigma_{3} (\mathbf{x}, \mathbf{z})^T \mathbf{z} \big) \qquad (=: f_5 (\mathbf{x}, \mathbf{z})) \\
		& = -\big(f_1 (\mathbf{x}, \mathbf{z}) + f_2 (\mathbf{x}, \mathbf{z}) + f_3 (\mathbf{x}, \mathbf{z}) + f_4 (\mathbf{x}, \mathbf{z}) + f_5 (\mathbf{x}, \mathbf{z})\big) =: -f(\mathbf{x}, \mathbf{z}) \,.
	\end{align*}
	For $d = 1$, denote by $\mathbf{M}_i$ the Gram matrix of polynomial $f_i(\mathbf{x}, \mathbf{z})$ for $i = 1, \ldots, 5$ and $\mathbf{M}$ the Gram matrix of polynomial $f(\mathbf{x}, \mathbf{z})$, with basis $[\mathbf{x}^T,\mathbf{z}^T,1]$. We have explicitly $\mathbf{M} = \sum_{i= 1}^5 \mathbf{M}_i$, where $\mathbf{M}_i$ has the following form
	\begin{align*}
		& \mathbf{M}_1 = \begin{bmatrix}
			\mathbf{0}_{p_0\times p_0} & \mathbf{0}_{p_0\times p} & \mathbf{0}_{p_0\times 1} \\
			\mathbf{0}_{p \times p_0} & \mathbf{C}^T \mathbf{Q}^2 \mathbf{C} & \mathbf{C}^T \mathbf{Q}^2 \mathbf{c} + \mathbf{C}^T \mathbf{Q} \mathbf{b} \\
			\mathbf{0}_{1 \times p_0} & \mathbf{c}^T \mathbf{Q}^2 \mathbf{C} + \mathbf{b}^T \mathbf{Q} \mathbf{C} & \mathbf{c}^T \mathbf{Q}^2 \mathbf{c} + 2\mathbf{b}^T \mathbf{Q} \mathbf{c} + \mathbf{b}^T \mathbf{b} - 1 \end{bmatrix} \,,\\
		& \mathbf{M}_2 = \begin{cases}
			\begin{bmatrix}
				-\diag(\sigma_1) & \mathbf{0}_{p_0\times p} & \diag(\sigma_1) \cdot \mathbf{x}_0 \\
				\mathbf{0}_{p \times p_0} & \mathbf{0}_{p \times p} & \mathbf{0}_{p \times 1} \\
				\mathbf{x}_0^T \cdot \diag(\sigma_1) & \mathbf{0}_{1\times p} & \sigma_1^T (\varepsilon^2 - \mathbf{x}_0^2)\\
			\end{bmatrix}\,, & \text{ for $L_{\infty}$-norm}, \\
			\sigma_1 \begin{bmatrix}
				-\mathbf{I}_{p_0} & \mathbf{0}_{p_0\times p} & \mathbf{x}_0 \\
				\mathbf{0}_{p \times p_0} & \mathbf{0}_{p \times p} & \mathbf{0}_{p \times 1} \\
				\mathbf{x}_0^T & \mathbf{0}_{1\times p} & \varepsilon^2 - \mathbf{x}^T_0 \mathbf{x}_0 \\
			\end{bmatrix}\,, & \text{ for $L_{2}$-norm},
		\end{cases} \\
		& \mathbf{M}_3 = \begin{bmatrix}
			\mathbf{0}_{p_0\times p_0} & -\frac{1}{2} \mathbf{U}^T \diag(\tau) & \mathbf{0}_{p_0\times 1} \\
			-\frac{1}{2} \diag(\tau) \mathbf{U} & \diag (\tau) (\mathbf{I}_{p} - \mathbf{W}) & -\frac{1}{2} \diag (\tau) \cdot \mathbf{u} \\
			\mathbf{0}_{1 \times p_0} & -\frac{1}{2} \mathbf{u}^T \cdot \diag (\tau) & 0
		\end{bmatrix} \,, \\
		& \mathbf{M}_4 = \begin{bmatrix}
			\mathbf{0}_{p_0\times p_0} & \mathbf{0}_{p_0\times p} & -\frac{1}{2} \mathbf{U}^T \sigma_3 \\
			\mathbf{0}_{p \times p_0} & \mathbf{0}_{p \times p} & \frac{1}{2} (\mathbf{I}_{p} - \mathbf{W}^T) \sigma_3 \\
			-\frac{1}{2} \sigma_3^T \mathbf{U} & \frac{1}{2} \sigma_3^T (\mathbf{I}_{p} - \mathbf{W}) & -\sigma_3^T \mathbf{u}
		\end{bmatrix} \,, \\
		& \mathbf{M}_5 = \begin{bmatrix}
			\mathbf{0}_{p_0\times p_0} & \mathbf{0}_{p_0\times p} & \mathbf{0}_{p_0\times 1} \\
			\mathbf{0}_{p \times p_0} & \mathbf{0}_{p \times p} & \frac{1}{2} \sigma_2 \\
			\mathbf{0}_{1 \times p_0} & \frac{1}{2} \sigma_2^T & 0
		\end{bmatrix} \,.
	\end{align*}
	Moreover, in order to improve the quality of the ellipsoid, we can also use the \emph{slope restriction} condition of ReLU function as proposed in \cite{hu2020reach}: $(z_j - z_i) (\mathbf{W}_{j,:} \mathbf{z} + \mathbf{U}_{j,:} \mathbf{x} + u_j - \mathbf{W}_{i,:} \mathbf{z} - \mathbf{U}_{i,:} \mathbf{x} - u_i) - (z_j - z_i)^2 \ge 0$ for $i \ne j$. The Gram matrix of the SOS combination of these constraints with basis $[\mathbf{x}^T,\mathbf{z}^T,1]$ has the form
	\begin{align*}
		\mathbf{M}_6 = \begin{bmatrix}
			\mathbf{U} & \mathbf{W} & \mathbf{u} \\
			\mathbf{0}_{p\times p_0} & \mathbf{I}_{p} & \mathbf{0}_{p \times 1}\\
			\mathbf{0}_{1\times p_0} & \mathbf{0}_{1\times p} & 1
		\end{bmatrix}^T
		\begin{bmatrix}
			\mathbf{0}_{p_0\times p_0} & \mathbf{T} & \mathbf{0}_{p_0\times 1}\\
			\mathbf{T} & -2\mathbf{T} & \mathbf{0}_{p \times 1} \\
			\mathbf{0}_{1\times p_0} & \mathbf{0}_{1 \times p} & 0
		\end{bmatrix}
		\begin{bmatrix}
			\mathbf{U} & \mathbf{W} & \mathbf{u} \\
			\mathbf{0}_{p\times p_0} & \mathbf{I}_{p} & \mathbf{0}_{p \times 1}\\
			\mathbf{0}_{1\times p_0} & \mathbf{0}_{1\times p} & 1
		\end{bmatrix} \,,
	\end{align*}
	where $\mathbf{T} = \sum_{i = 1}^{p-1} \sum_{j = i+1}^p \lambda_{ij} (\mathbf{e}_i - \mathbf{e}_j) (\mathbf{e}_i - \mathbf{e}_j)^T$ with $\lambda_{ij} \ge 0$ for all $i < j$, and $\{\mathbf{e}_i\}_{i = 1}^p \subseteq \mathbb{R}^p$ is the canonical basis of $\mathbb{R}^p$. Since $\sigma_0 (\mathbf{x}, \mathbf{z})$ is an SOS polynomial of degree at most 2, we conclude that $-\mathbf{M} \succeq 0$. According to Lemma 5 in \cite{morari2019proba}, the constraint $-\mathbf{M} \succeq 0$ is equivalent to an SDP constraint using \emph{Schur complements}, which finishes the proof of Lemma \ref{ellipmon}.
	
	\subsection{An Adversarial Example}\label{adv}
	\begin{figure}[H]
		\centering
		\begin{subfigure}[t]{0.49\textwidth}
			\centering
			\includegraphics[width=\textwidth]{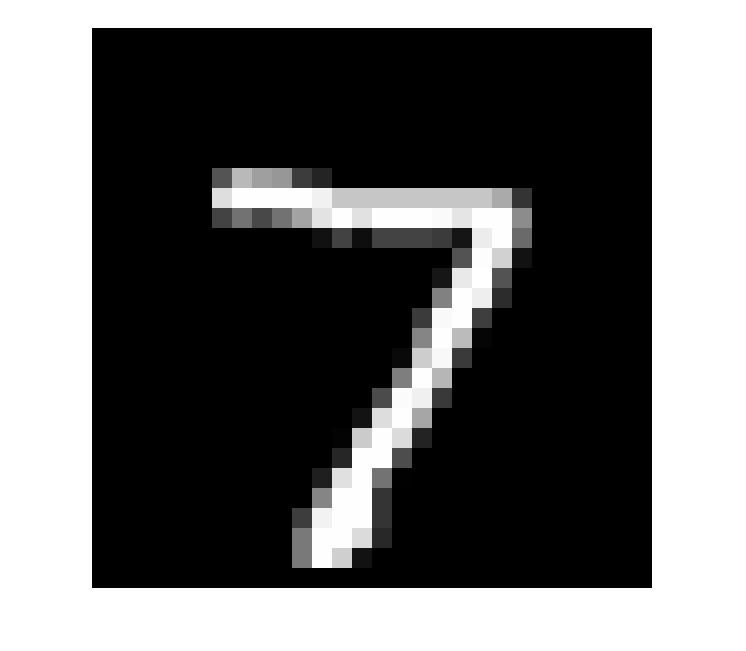}
			\caption{Original example, classified as 7}
			\label{7a}
		\end{subfigure}
		\hfill
		\begin{subfigure}[t]{0.49\textwidth}
			\centering
			\includegraphics[width=\textwidth]{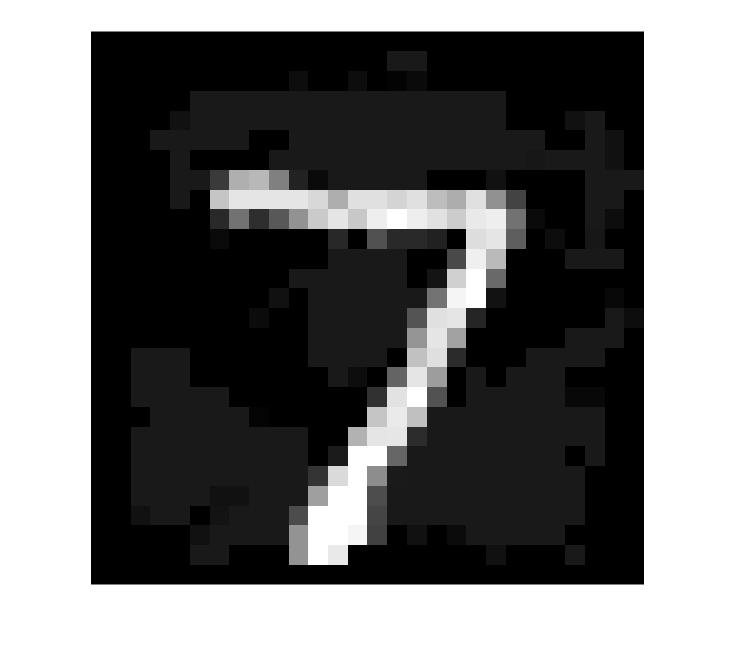}
			\caption{Adversarial example, classified as 3}
			\label{7b}
		\end{subfigure}
		\caption{An adversarial example of the first test MNIST input found by PGD algorithm for $L_{\infty}$ norm with $\varepsilon = 0.1$.}
		\label{7}
	\end{figure}
	
	\subsection{Licenses of Used Assets}
	\label{sec:licenses}
	\begin{table}[ht]
		\caption{Summary of the licenses of used assets}
		\label{table_lic}
		\centering
		\begin{tabular}{cc}
			\toprule
			Software & License \\
			\midrule
			Julia & MIT License \\
			JuMP & Mozilla Public License \\
			Matlab & Proprietary Software \\
			CVX & CVX Standard License \\
			Python & Python Software Foundation License \\
			Pytorch & Berkeley Software Distribution \\
			Mosek & Proprietary Software \\
			Our code &  CeCILL Free Software License \\
			\bottomrule
		\end{tabular}
	\end{table}
\end{document}